\newtheorem{theorem}{Theorem}
\newtheorem{lemma}[theorem]{Lemma}
\newtheorem{corollary}[theorem]{Corollary}
\newtheorem{proposition}[theorem]{Proposition}
\newtheorem{remark}[theorem]{Remark}
\newcommand{\R}{\mathbb R}
\numberwithin{theorem}{section}
\numberwithin{equation}{section}
\begin{document}
\title{Quasi-local mass at the null infinity of the Vaidya spacetime}
\author{Po-Ning Chen, Mu-Tao Wang, and Shing-Tung Yau}
\date{\today}
\begin{abstract}There are two important statements regarding the Trautman-Bondi mass \cite{Bondi-Burg-Metzner, Trautman, CJM} at null infinity: one is the positivity \cite{Schoen-Yau, Horowitz-Perry}, and the other 
is the Bondi mass loss formula \cite{Bondi-Burg-Metzner}, which are both global in nature. The positivity of the quasi-local mass can potentially lead to a local description
at null infinity. This is confirmed for the Vaidya spacetime in this note. We study the Wang-Yau quasi-local mass on surfaces of fixed size at the null infinity of the Vaidya spacetime. The optimal embedding equation is solved explicitly and the quasi-local mass is evaluated in terms of the mass aspect function of the Vaidya spacetime. 
\end{abstract}
\thanks{P.-N. Chen is supported by NSF grant DMS-1308164, M.-T. Wang is supported by NSF grants DMS-1105483 and DMS-1405152.  and S.-T. Yau is supported by NSF
grants  PHY-0714648 and DMS-1308244.  Part of this work was carried out
when P.-N. Chen and M.-T. Wang were visiting the Department of Mathematics and the Center of Mathematical Sciences and Applications at Harvard University and the National Center of 
Theoretical Sciences at National Taiwan University.} 
\maketitle
\section{Introduction}
In \cite{Chen-Wang-Yau6,Chen-Wang-Yau5}, we study gravitational perturbations of black holes by evaluating the Wang-Yau quasi-local mass on surfaces of fixed size near the infinity of the asymptotically flat spacetime. In particular, we prove a theorem regarding the decay rate of the quasi-local energy-momentum at the infinity of gravitational perturbations of the Schwarzschild spacetime. Namely, in Theorem 5.4 of \cite{Chen-Wang-Yau5},  we proved that the quasi-local mass on spheres of unit size decays at the rate of $O(d^{-2})$ for gravitational perturbations of the Schwarzschild spacetime. We also evaluated the quasi-local mass for two explicit examples, the polar perturbations and axial perturbations of the Schwarzschild spacetime introduced by Chandrasekhar \cite{Chandrasekhar}.  These are perturbations of the Schwarzschild spacetime solving the linearized vacuum Einstein equation. 

In this article, we follow the ideas and techniques developed in \cite{Chen-Wang-Yau5} to evaluate the Wang-Yau quasi-local mass \cite{Wang-Yau1,Wang-Yau2} on spheres of unit size at the null infinity of the Vaidya spacetime. The Vaidya spacetime describes the exterior region of a spherically symmetric star emitting or absorbing null dust. While it is not a small perturbation of the Schwarzschild spacetime, we show that the quasi-local mass decays at the same rate as predicted by Theorem 5.4 of \cite{Chen-Wang-Yau5}. In particular, we can apply Theorem 5.2 of \cite{Chen-Wang-Yau5} (see Theorem \ref{mass_limit_general} below) to evaluate the quasi-local mass in terms of the expansion of the induced metric and mean curvature of the surface, as well as the solution of the optimal embedding equation. 

To evaluate the quasi-local mass, we first solved the optimal embedding equation explicitly in terms of the mass aspect function of the Vaidya spacetime (see Lemma \ref{sol_optimal_first_order} below). This solution is used to evaluate the quasi-local mass in terms of the mass aspect function (see Theorem \ref{main_theorem} below). In particular, from Theorem \ref{main_theorem}, we observe directly that the quasi-local mass is positive if the mass aspect function is monotonically decreasing.

The article is organized as follows: in Section \ref{Vaidya}, we review the construction, notation and results from \cite{Chen-Wang-Yau5} that we will use in this article. We also give a brief review of the Vaidya spacetime. In Section \ref{section_optimal}, we solve the optimal embedding equation and evaluate the terms in Theorem 5.2 of \cite{Chen-Wang-Yau5}  that depend on the equation. In Section \ref{induced_data}, we compute the expansion of the data to the order of $O(d^{-2})$ and finish the evaluation of the quasi-local mass.
\section{Unit size spheres at null infinity of the Vaidya spacetime} \label{Vaidya}
The Vaidya metric, in the $(u, r, \theta, \phi)$ coordinate system, is of the form
\begin{equation}
-(1-\frac{M(u)}{r}) du^2 -2dudr + r^2 (d\theta^2+\sin^2\theta d\phi^2)
\end{equation}
The stress-energy tensor of the Vaidya metric is 
\[ T_{\alpha\beta}= -\frac{\partial_u M(u)}{4 \pi r^2}\nabla_\alpha u  \nabla_\beta u \] with respect to any spacetime coordinate system $x^\alpha, \alpha=0, 1, 2, 3$. 
In particular, the dominant energy condition is satisfied if $M$ is monotonically non-increasing. The function $M$ is referred to as the mass aspect function.

We consider a null geodesic of the form $u=u_0, \theta=\theta_0, \phi=\phi_0$ where $u_0, \theta_0, \phi_0$ are all constants, and ``unit spheres" 
centered at points on such a null geodesic. The limit of the quasi-local mass of these spheres is then evaluated at null infinity. 

Consider a new coordinate system $(t, y^1, y^2, y^3)$ with $t=u+r, y_1= r\sin\theta\sin \phi, y_2= r\sin\theta \cos \phi,$ and $y_3= r\cos \theta$.
A null geodesic with $u=0, \theta=\theta_0, \phi=\phi_0$ corresponds to points with the new coordinates $(d, d_1, d_2, d_3)$, where $d  = \sqrt{\sum_i d_i^2} $.

With fixed $\tilde{d}_i=\frac{d_i}{d}, i=1, 2, 3$, we consider the sphere $\Sigma_d$ of (Euclidean) radius $1$ centered at a point $ (d, d_1, d_2, d_3) $ on the null geodesic.  Namely,
\begin{equation}\label{unit_sphere}
\Sigma_{d}= \{(t, y^1, y^2, y^3)| \,\,t=d, \sum_i (y^i-d_i)^2 =1\}.
 \end{equation}

In the following, we study the family of surfaces $\Sigma_d$ defined in \eqref{unit_sphere} as $d\rightarrow \infty$.
To study the geometry of these surfaces, we consider another spherical coordinate system $\{s,  u^{a} \}_{a=1, 2}$ centered at $(d_1,d_2,d_3)$ on the $t=d$ slice. The coordinate transformation between $\{s, u^1, u^2\}$ and $\{y^1, y^2, y^3\}$ is given by
\begin{equation}\label{coord_transf} d_i + s \tilde X^ i (u^1, u^2)= {y}^i, i=1, 2, 3 \end{equation} where $\tilde X^i (u^a), i=1, 2, 3$ are the three standard coordinate functions on a unit sphere. In particular, from \eqref{coord_transf} we derive: 
\begin{equation}\label{r_and_d} r^2= d^2+ 2s \sum_i d_i \tilde X^i + s^2.\end{equation}
In the new coordinate system $(t, s, u^1, u^2)$, $\Sigma_{d}$ is simply the sphere with $t=d$ and $s=1$.  We define the function \begin{equation}\label{Z} Z(u^a)= \tilde d_i \tilde X^i (u^a)\end{equation} on $S^2$. (This is denoted by $Z_1$ in \cite{Chen-Wang-Yau5}).  From \eqref{coord_transf}, we solve
\[\begin{split} dr&=(\sum_i \tilde{X}^i \tilde y^i) ds+(s\sum\tilde{X}^i_b \tilde{y}^i) du^b\\
dv^A&=(\frac{1}{r}\sum_i\tilde{X}^i \tilde{\nabla}^A \tilde{y}^i) ds+(\frac{s}{r}\sum_i\tilde{X}^i_b\tilde{\nabla}^A\tilde{y}^i) du^b, \end{split}\] where $A=1, 2$, $v^1=\theta, v^2=\phi$, and $\tilde{y}^i=\frac{y^i}{r}, i=1, 2, 3$.

In terms of the coordinate system $\{s, u^1, u^2\}$, the metric on the hypersurface $t=d$ is
\[  \bar g_{ss} ds^2 + 2 \bar g_{s a} ds du^{ a} + \bar g_{ab} du^{a}  du^{b} \]
where
\begin{equation} \label{new_metric}
\begin{split}
\bar g_{ss}= & 1+ \frac{M(u)}{r} \sum_{i,j} (\tilde X^i \tilde y^i \tilde X^j \tilde y^j) \\
\bar g_{sa}=& \frac{sM(u)}{r}  \sum_{i,j} (\tilde X^i \tilde y^i \tilde X^j_{a} \tilde{y}^j )\\
\bar g_{{a}{b}}= &s^2 [\tilde \sigma_{ab}+   \frac{M(u)}{r} \sum_{i,j}  (\tilde X^i_{a} \tilde{y}^i \tilde X^j_{b} \tilde{y}^j)].
\end{split}
\end{equation}

From \eqref{coord_transf} and \eqref{r_and_d} we derive the following expansions on  $\Sigma_d$:
\begin{equation} \label{useful_expansion}
\begin{split}
r=& d+ Z + \frac{1-Z^2}{2d}+ O(d^{-2})\\
\sum_i \tilde X^i \tilde y^i = & Z + \frac{1-Z^2}{d}+ O(d^{-2})\\
\sum_i \tilde X_a^i \tilde y^i = & Z_a - \frac{ZZ_a}{d}+ O(d^{-2})\\
\end{split}
\end{equation}
and
\begin{equation}  \label{useful_expansion2}
\begin{split}
\frac{\partial r}{\partial s} = & Z+ \frac{1-Z^2}{d}+ O(d^{-2})\\
\sum_i \tilde X^i \frac{ \partial \tilde y^i}{\partial s} = &  \frac{1-Z^2}{d}+ O(d^{-2})\\
\sum_i  \tilde X_a^i  \frac{ \partial \tilde y^i}{\partial s}= &  - \frac{ZZ_a}{d}+ O(d^{-2}).\\
\end{split}
\end{equation}
The restriction of $u$ to $\Sigma_{d}$ is a function on $S^2$ which also depends on $d$. As a result, the restriction of the mass aspect function $M(u)$ to $\Sigma_{d}$ is a function $M(d,u^a)$.
To evaluate the limit of the quasi-local mass, we consider $\lim_{d \to \infty} M(d,u^a)$. From \eqref{useful_expansion}, we conclude that on $\Sigma_{d}$, we have
\begin{equation}\label{u_expansion}u=-Z-\frac{1-Z^2}{2d}+ O(d^{-2})  \end{equation}
and there is a function $F$ of a single variable defined on  $[-1,1]$ such that
\begin{equation} \label{mass_aspect_limit}\lim_{d \to \infty} M= F(Z) \end{equation}
where $F(Z)$ is the composition of $F$ with $Z(u^a)$.
From \eqref{mass_aspect_limit}, it is easy to see that 
\[ M(u)= F(Z) + O(d^{-1}).\]
Similarly, we have
\[
\begin{split}
\partial_u M(u)&= -F'(Z) + O(d^{-1}) \\
 \partial^2_u M(u)&=  F''(Z) + O(d^{-1}) \\
  \partial^3_u M(u)&= -F'''(Z) + O(d^{-1}),
\end{split}
\]
where $F'$ denotes the derivative of $F$ .

Let $\sigma_{ab}$, $|H|$ and $\alpha_H$ be the induced metric, the norm of the mean curvature vector, and the connection 1-form in mean curvature gauge of the surface $\Sigma_{d}$, respectively. It is easy to see that they admit the following expansions:
\begin{equation} \label{expansion_general}
\begin{split}
\sigma_{ab} = & \tilde \sigma_{ab} + \frac{\sigma^{(-1)}_{ab}}{d} + \frac{\sigma^{(-2)}_{ab}}{d^2} + O(d^{-3})\\
|H| = &2 + \frac{h^{(-1)}}{d}+ \frac{h^{(-2)}}{d^2} + O(d^{-3})\\
(\alpha_H)_a = &\frac{(\alpha_H^{(-1)})_a}{d} + \frac{(\alpha_H^{(-2)})_a}{d^2}+ O(d^{-3}).
\end{split}
\end{equation}
In particular, we have the following expansion for the area form
\begin{equation}\label{expansion_area} {\rm d}V_\sigma = (1+ \frac{V^{(-1)}}{d} + \frac{V^{(-2)}}{d^2}  ) {\rm d}V_{S^2} +  O(d^{-3}). \end{equation}

The quasi-local mass for surfaces admitting such data is evaluated in \cite[Theorem 5.2]{Chen-Wang-Yau5}. 
\begin{theorem} \label{mass_limit_general}
Let $\Sigma_d$ be  a family of surfaces whose data $(\sigma,|H|,\alpha_H)$ admits the expansions given by \eqref{expansion_general}.  Assume further that 
\begin{align}
\label{second_order_cond_1}\int_{S^2} (h_0^{(-1)} -h^{(-1)}) \, {\rm d}V_{S^2}=& 0\\
\label{second_order_cond_2}\int_{S^2} \tilde X^i \tilde \nabla^a (\alpha_H^{(-1)})_a  \, {\rm d}V_{S^2}=&0.  \end{align}
Let $X$ be the isometric embedding of $\Sigma_d$ into $\R^{3,1}$ such that  $X^0=O(d^{-1})$ and the pair $X$  and $T_0=(1,0,0,0)$ solves the leading order
term of the optimal embedding equation. Then
\begin{equation}\label{mass_general}
\begin{split}
  & E(\Sigma_d ,X,T_0)\\
=& \frac{1}{8 \pi d^2}\int_{S^2} \Big \{ \frac{|\tilde\nabla N |^2}{2} -N^2  - \frac{1}{4}(X^0)^{(-1)} \widetilde  \Delta (\widetilde  \Delta + 2 ) (X^0)^{(-1)} \\
   & \qquad \qquad  \,\,\,\, -V^{(-2)} - h^{(-2)} -h^{(-1)}V^{(-1)}   \Big \} \, {\rm d}V_{S^2} +O(d^{-3})
\end{split}
 \end{equation}
where $(X^0)^{(-1)}$ and $N$ satisfy
\begin{equation}\label{optimal_first_order} 
\begin{split}
\widetilde \Delta (\widetilde  \Delta + 2 ) (X^0)^{(-1)}= & 2 \tilde \nabla^a (\alpha_H^{(-1)})_a\\
  -(\widetilde  \Delta + 2 )N =& \frac{1}{2}\left [  \tilde\nabla^a \tilde\nabla^b \sigma^{(-1)}_{ab} - tr_{S^2} \sigma^{(-1)} -\widetilde \Delta( tr_{S^2} \sigma^{(-1)}) \right].
\end{split}
\end{equation}  
\end{theorem}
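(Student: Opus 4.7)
The plan is to substitute the expansions \eqref{expansion_general} and \eqref{expansion_area} into the Wang--Yau quasi-local energy functional together with an asymptotic expansion of the isometric embedding $X:\Sigma_d\to\R^{3,1}$, and collect terms to order $d^{-2}$. Since $|H|\to 2$ and $\sigma_{ab}\to\tilde\sigma_{ab}$, the surface looks like the round unit sphere at leading order, and the optimal embedding lies close to the standard embedding of $S^2\subset\R^3\subset\R^{3,1}$.

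I would first set up the asymptotic isometric embedding, writing
\[ X^0 = d^{-1}(X^0)^{(-1)} + d^{-2}(X^0)^{(-2)} + O(d^{-3}), \qquad X^i = \tilde X^i + d^{-1}(X^i)^{(-1)} + d^{-2}(X^i)^{(-2)} + O(d^{-3}), \]
and decomposing each $(X^i)^{(-k)}$ into a radial part (proportional to $\tilde X^i$) and a tangential part. The isometric embedding condition $g_{\R^{3,1}}(dX,dX)=\sigma_{ab}$ produces at each order a linearized rigidity equation on $S^2$. At first order, the radial scalar mode and tangential vector mode are determined by $\sigma^{(-1)}_{ab}$ via the standard $S^2$ infinitesimal rigidity theory; I identify the radial mode with the function $N$, and the equation \eqref{optimal_first_order} for $N$ is exactly the rigidity equation rewritten using the divergence and trace of $\sigma^{(-1)}_{ab}$. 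The equation for $(X^0)^{(-1)}$ in \eqref{optimal_first_order} comes from linearizing the full optimal embedding equation (the Euler--Lagrange equation in $\tau$) about the standard embedding, with source $\tilde\nabla^a(\alpha_H^{(-1)})_a$ arising from the $O(d^{-1})$ connection one-form.

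Next I would expand the Wang--Yau quasi-local energy density, a functional of $(\sigma,|H|,\alpha_H)$ and $\tau=X^0$, order by order in $d^{-1}$. The $O(1)$ integrand vanishes pointwise because the reference and physical mean curvatures both tend to $2$. The integrated $O(d^{-1})$ contribution is killed by the two hypotheses: \eqref{second_order_cond_1} removes the spherically symmetric ($\ell=0$) piece, and \eqref{second_order_cond_2} removes the dipole ($\ell=1$) piece supplied by $\tilde X^i\tilde\nabla^a(\alpha_H^{(-1)})_a$. At order $d^{-2}$, the linear terms in $V^{(-2)}$, $h^{(-2)}$, and the product $h^{(-1)}V^{(-1)}$ come directly from the area element and mean curvature expansions. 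The remaining quadratic pieces in the embedding perturbations are brought into the stated form by integration by parts on $S^2$ together with \eqref{optimal_first_order}: the time-component quadratic terms assemble into $-\tfrac14(X^0)^{(-1)}\tilde\Delta(\tilde\Delta+2)(X^0)^{(-1)}$ via the $(\tilde\Delta\tau)^2$ piece and the $\alpha_H\cdot\tilde\nabla\tau$ integration by parts, and the purely spatial embedding perturbation terms reduce to $\tfrac12|\tilde\nabla N|^2 - N^2$ after expanding the reference mean curvature $\hat H_0$ of the perturbed embedding.

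The main obstacle I anticipate is the algebraic bookkeeping: keeping track of the second-order isometric embedding data $(X^i)^{(-2)}$ needed for order $d^{-2}$ accuracy, making sure cross terms between $N$, the tangential first-order embedding perturbation, and $(X^0)^{(-1)}$ either cancel or assemble correctly, and verifying that the two integrability conditions \eqref{second_order_cond_1}--\eqref{second_order_cond_2} are precisely what is required for the $O(d^{-1})$ contribution to integrate to zero on $S^2$ and for the $O(d^{-2})$ expression to collapse to the intrinsic formula \eqref{mass_general}. Since the full computation has already been carried out in Chen--Wang--Yau5, the proof here would consist of invoking that expansion after verifying that the Vaidya data satisfy the assumed expansion structure.
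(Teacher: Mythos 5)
Your proposal matches the paper: Theorem \ref{mass_limit_general} is not proved in this paper at all but is quoted as Theorem 5.2 of \cite{Chen-Wang-Yau5}, so your concluding step---invoking that computation after checking the data admit the expansions \eqref{expansion_general}---is exactly the paper's route, and your sketch (infinitesimal rigidity of $S^2$ producing $N$, the linearized optimal embedding equation producing $(X^0)^{(-1)}$, and an order-by-order expansion of the Wang--Yau energy) is a plausible reconstruction of the cited proof. One small calibration: \eqref{second_order_cond_2} is best understood as the solvability condition for $\widetilde\Delta(\widetilde\Delta+2)(X^0)^{(-1)}=2\tilde\nabla^a(\alpha_H^{(-1)})_a$, i.e.\ orthogonality of the source to the $\ell=1$ part of the kernel of $\widetilde\Delta(\widetilde\Delta+2)$, rather than a cancellation of a dipole term in the $O(d^{-1})$ energy, whose vanishing is already ensured by \eqref{second_order_cond_1}.
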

\begin{remark}
In fact, \eqref{optimal_first_order}  is the leading order term of the optimal embedding equation. While the solution to \eqref{optimal_first_order}  is not unique, the leading order term of the quasi-local energy is independent of the choice of the solution. 
\end{remark}
We evaluate the quasi-local mass for $\Sigma_{d}$ in the next two sections. In Section \ref{section_optimal}, we solve $(X^0)^{(-1)}$ and $N$ from \eqref{optimal_first_order}  and evaluate the terms in \eqref{mass_general} involving $(X^0)^{(-1)}$ and $N$. In Section \ref{induced_data}, we evaluate the terms in \eqref{mass_general} involving $h^{(-1)}$, $h^{(-2)}$, $V^{(-1)} $ and $V^{(-2)} $.

\section{Optimal embedding equation} \label{section_optimal}
We first compute $ \alpha_H^{(-1)}$ and $ \sigma^{(-1)}_{ab} $. 
\begin{lemma}
For the data $ \alpha_H^{(-1)}$ and $ \sigma^{(-1)}_{ab} $, we have
\begin{equation} \label{lemma 3.1}
\begin{split}
\sigma^{(-1)}_{ab} = & F Z_a Z_b\\
(\alpha_H^{(-1)})_a= &  -F' Z Z_a + \frac{1}{4} F''(1-Z^2)Z_a.
\end{split}
\end{equation}
\end{lemma}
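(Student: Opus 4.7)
The plan is to compute both $\sigma^{(-1)}_{ab}$ and $(\alpha_H^{(-1)})_a$ directly from the Vaidya geometry restricted to $\Sigma_d$, expanding in powers of $1/d$ along the lines of the Schwarzschild computation in \cite{Chen-Wang-Yau5}. The new feature compared to Schwarzschild is the $u$-dependence of $M$, which feeds into the expansion via \eqref{u_expansion} and \eqref{mass_aspect_limit}.

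For the induced metric, I would simply restrict $\bar g_{ab}$ from \eqref{new_metric} to $s=1$. The flat background contributes $\tilde\sigma_{ab}$, while the mass term $\frac{M(u)}{r}\sum_{i,j}\tilde X^i_a \tilde y^i \tilde X^j_b \tilde y^j$ is $O(d^{-1})$: combining $1/r = 1/d + O(d^{-2})$ and $\sum_i\tilde X^i_a \tilde y^i = Z_a + O(d^{-1})$ from \eqref{useful_expansion} with $M(u)|_{\Sigma_d} = F(Z) + O(d^{-1})$ from \eqref{mass_aspect_limit} produces $\sigma^{(-1)}_{ab} = F(Z) Z_a Z_b$ immediately.

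For $(\alpha_H^{(-1)})_a$, I would first choose an initial orthonormal frame $\{e_0, e_3\}$ on the normal bundle of $\Sigma_d$, with $e_0$ the future unit normal to the $t=d$ hypersurface and $e_3$ the outward unit normal to $\Sigma_d$ inside this hypersurface. Using the Vaidya Christoffel symbols together with \eqref{useful_expansion} and \eqref{useful_expansion2}, I would compute the spacetime mean curvature components $H^0$ and $H^3$ and the normal-bundle connection coefficient $\langle \nabla_a e_3, e_0\rangle$ to order $1/d$; the $du\,dr$ cross term in the Vaidya metric makes $e_0$ differ from $\partial_t$, a subtlety that must be tracked. A boost by the angle $\phi$ with $\tanh\phi = H^0/H^3$ then brings the frame into the mean curvature gauge, and the connection 1-form in the new gauge differs from $\langle \nabla_a e_3, e_0\rangle$ by the single additional term $\partial_a\phi$.

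The main obstacle is the bookkeeping of the two distinct ways in which the $u$-dependence of $M$ enters. The $-F'(Z) Z Z_a$ piece emerges at leading order: tangential derivatives of $M(u)|_{\Sigma_d}$ produce $F'(Z) Z_a$ through $\partial_u M = -F'(Z) + O(d^{-1})$ and $\partial_a u = Z_a + O(d^{-1})$, with the extra factor of $Z$ coming from the second fundamental form in the $e_3$ direction (equivalently from the boost angle at leading order). The more delicate $\tfrac14 F''(Z)(1-Z^2) Z_a$ contribution arises from the subleading $-(1-Z^2)/(2d)$ piece of $u$ in \eqref{u_expansion} being differentiated tangentially, which triggers the second derivative $F''(Z)$ via the chain rule applied to $\partial_u M$. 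Once both contributions are extracted consistently to order $1/d$ and combined with the boost correction $\partial_a\phi$, the formula stated in \eqref{lemma 3.1} follows; all remaining terms fall into $O(d^{-2})$.
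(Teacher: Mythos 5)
Your derivation of $\sigma^{(-1)}_{ab}$ is exactly the paper's: restrict $\bar g_{ab}$ from \eqref{new_metric} and expand using \eqref{useful_expansion} and \eqref{mass_aspect_limit}. Your plan for $(\alpha_H^{(-1)})_a$ is also, in substance, the paper's route: the paper computes the second fundamental form $k$ of the slice $t=d$ (with the same approximate unit normal $\partial_t + \frac{M(u)}{r}\partial_r$ that you flag, and with only $k_{rr} = \frac{F'}{2d} + O(d^{-2})$ contributing at the relevant order on $\Sigma_d$), and then applies the identity $(\alpha_H)_a = -k(\nu,\partial_a) + \nabla_a \frac{tr_{\Sigma}k}{|H|}$, which is precisely your frame connection $\langle \nabla_a e_3, e_0\rangle$ plus the boost correction $\partial_a \phi$, since here $\phi = \frac{tr_\Sigma k}{|H|} + O(d^{-2})$. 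So methodologically you coincide with the paper.

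However, your bookkeeping of the key term $\frac14 F''(1-Z^2)Z_a$ is wrong, and it is off by one order in $1/d$: following it, the computation would fail at exactly the step you call delicate. Inserting the subleading $-\frac{1-Z^2}{2d}$ piece of $u$ from \eqref{u_expansion} into $\partial_u M$ gives $\partial_u M|_{\Sigma_d} = -F'(Z) - \frac{1}{2d}F''(Z)(1-Z^2) + O(d^{-2})$; but $\partial_u M$ enters the data only through $k_{rr}$, which already carries a factor $\frac{1}{2r} = O(d^{-1})$, so this correction affects $\alpha_H$ only at order $d^{-2}$ --- it is part of $\alpha_H^{(-2)}$, and in the paper this mechanism surfaces only in the later $O(d^{-2})$ computations (e.g.\ in the expansion of $\partial_s M(u)$ in Section \ref{induced_data}). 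The genuine source of the $F''$ at order $d^{-1}$ is the tangential derivative of the \emph{leading} term of the boost correction: $\frac{tr_\Sigma k}{|H|} = \frac{F'(Z)(1-Z^2)}{4d} + O(d^{-2})$, where the factor $(1-Z^2) = \tilde\sigma^{ab}Z_a Z_b$ arises from taking the trace of $k_{ab} = \frac{F'}{2d}Z_aZ_b$ (not from the subleading part of $u$), and $\nabla_a F'(Z) = F''(Z)Z_a$ by the chain rule through the leading-order relation $u = -Z + O(d^{-1})$. Relatedly, your parenthetical ``equivalently from the boost angle at leading order'' for the $-F'ZZ_a$ piece is also off: the frame term $-k(\nu,\partial_a) = -\frac{F'}{2d}ZZ_a$ (the $Z$ being $\partial r/\partial s$) and the derivative of $(1-Z^2)$ inside the boost term each contribute $-\frac{F'}{2d}ZZ_a$, and both are needed to produce the coefficient $-F'$; they are distinct contributions, not two descriptions of one. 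With the order counting corrected, your outline reproduces the paper's proof.
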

\begin{proof}
From \eqref{new_metric}, \eqref{useful_expansion} and \eqref{u_expansion}, we immediately obtain the expression for $\sigma^{(-1)}_{ab} $. Next we  compute the second fundamental form of the hypersurface. The unit normal vector, up to lower order terms, is 
\[ \frac{\partial }{\partial t} + \frac{M(u)}{r} \frac{\partial }{\partial r}   \]
and all the components of the second fundamental form of $t=d$ vanish except 
\[  k_{rr} =\frac{F'}{2d} + O(d^{-2}).\]
On $\Sigma_{d}$, we have 
\[  dr =  Z ds + Z_a d  u^a +O(d^{-1})\]
and 
\[
\begin{split}
k_{sa} =&   \frac{F'}{2d} Z Z_a + O(d^{-2}) \\
k_{ab}= & \frac{F'}{2d} Z_a Z_b + O(d^{-2}).
\end{split}
\]
The lemma follows from \[ (\alpha_H)_a = -k(\nu, \partial_a)+ \nabla_a \frac{tr_{\Sigma}k }{|H|}. \] 
\end{proof}
We compute the right hand side of \eqref{optimal_first_order} in the following lemma.
\begin{lemma}
\begin{equation}\label{right_hand_side_1}
\frac{1}{2} \left [\tilde\nabla^a \tilde\nabla^b \sigma^{(-1)}_{ab} - tr_{S^2} \sigma^{(-1)} -\widetilde \Delta( tr_{S^2} \sigma^{(-1)})  \right] =-\frac{1}{2}F' Z (1-Z^2) - F(1-2Z^2) .
\end{equation}
and 
\begin{equation}\label{right_hand_side_2}
2 \tilde \nabla^a (\alpha_H^{(-1)})_a  = \frac{1}{2} F''' (1-Z^2) ^2 - 4 F'' Z(1-Z^2) - 2 F' (1-3Z^2)
\end{equation}
\end{lemma}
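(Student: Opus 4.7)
The plan is a direct computation, exploiting that $Z = \tilde d_i \tilde X^i$ is a first eigenfunction of the Laplacian on the standard $S^2$. Throughout, I will repeatedly invoke the three identities
\[ \tilde\Delta Z = -2Z, \qquad \tilde\nabla_a \tilde\nabla_b Z = -Z\,\tilde\sigma_{ab}, \qquad |\tilde\nabla Z|^2 = \tilde\sigma^{ab} Z_a Z_b = 1-Z^2, \]
together with the chain rule applied to functions of $Z$. A useful consequence, which I will use repeatedly, is the scalar formula $\tilde\Delta h(Z) = h''(Z)(1-Z^2) - 2Z\,h'(Z)$ for any one-variable function $h$.

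For \eqref{right_hand_side_1}, I start from $\sigma^{(-1)}_{ab} = F Z_a Z_b$, so $tr_{S^2}\sigma^{(-1)} = F(1-Z^2)$ immediately. To compute the double divergence, I first apply $\tilde\nabla^b$ to $F Z_a Z_b$; the product rule produces three terms which, after inserting the Hessian identity and $\tilde\Delta Z = -2Z$, collapse to $F'(1-Z^2)Z_a - 3 F Z Z_a$. Taking $\tilde\nabla^a$ of this expression and applying the same identities once more yields a combination involving $F''(1-Z^2)^2$, $F' Z(1-Z^2)$, and $F(1-3Z^2)$. Separately, $\tilde\Delta[F(1-Z^2)]$ is computed by regarding $F(1-Z^2)$ as a function $h(Z)$ and applying the scalar formula above, producing a matching $F''(1-Z^2)^2$ term. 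The $F''$ contributions then cancel in the signed sum $\tilde\nabla^a\tilde\nabla^b\sigma^{(-1)}_{ab} - tr_{S^2}\sigma^{(-1)} - \tilde\Delta(tr_{S^2}\sigma^{(-1)})$, and after collecting the remaining $F'$ and $F$ coefficients and dividing by $2$, the claimed $-\frac{1}{2} F' Z(1-Z^2) - F(1-2Z^2)$ drops out.

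For \eqref{right_hand_side_2}, I rewrite $(\alpha_H^{(-1)})_a = g(Z) Z_a$ with $g(Z) = -F' Z + \tfrac{1}{4} F''(1-Z^2)$. The general identity
\[ \tilde\nabla^a(g(Z) Z_a) = g'(Z)|\tilde\nabla Z|^2 + g(Z) \tilde\Delta Z = g'(Z)(1-Z^2) - 2 Z g(Z) \]
reduces the problem to a one-variable differentiation. Computing $g'(Z) = -\tfrac{3}{2} F'' Z - F' + \tfrac{1}{4} F'''(1-Z^2)$, substituting, and collecting terms gives $\frac{1}{4}F'''(1-Z^2)^2 - 2F''Z(1-Z^2) - F'(1-3Z^2)$, and multiplication by $2$ yields the stated formula.

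The proof presents no serious obstacle; it is a bookkeeping calculation in tensor calculus on $S^2$. The only care needed is to organize the product-rule expansions systematically and to track signs in the Hessian identity for $Z$, which is handled cleanly by passing to the auxiliary one-variable function and using the scalar Laplacian formula throughout.
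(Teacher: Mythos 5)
Your proof is correct and follows essentially the same route as the paper: both arguments are direct computations on $S^2$ from the identities $\tilde\Delta Z = -2Z$, $\tilde\nabla_a\tilde\nabla_b Z = -Z\tilde\sigma_{ab}$, $|\tilde\nabla Z|^2 = 1-Z^2$, with the same intermediate step $\tilde\nabla^b\sigma^{(-1)}_{ab} = F'(1-Z^2)Z_a - 3FZZ_a$ and the same cancellation of the $F''(1-Z^2)^2$ terms in \eqref{right_hand_side_1}. Your packaging of the divergences through the one-variable identity $\tilde\nabla^a\bigl(g(Z)Z_a\bigr) = g'(Z)(1-Z^2) - 2Zg(Z)$ is only a cosmetic streamlining of the paper's termwise product-rule expansion.
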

\begin{proof}
Recall
\[  \sigma^{(-1)}_{ab} = FZ_aZ_b  \] 
and thus
\[  tr_{S^2}\sigma^{(-1)}_{ab} = F(1-Z^2).  \] 
We compute
\[
\begin{split}
\widetilde \Delta( tr_{S^2} \sigma^{(-1)})  =& (\widetilde \Delta F)(1-Z^2)   -4Z \tilde \nabla _a F  \tilde \nabla^a Z + F \widetilde \Delta (1-Z^2 )\\
= &F'' (1-Z^2)^2 - 6 F' Z(1-Z^2)^2+ F(6Z^2-2)
\end{split}
\]
and 
\[  
\begin{split}
\tilde\nabla^a \tilde\nabla^b \sigma^{(-1)}_{ab} = & \tilde \nabla^a [ F' (1-Z^2) Z_a-3FZZ_a]\\
=& F'' (1-Z^2)^2  - 7 F' Z(1-Z^2)^2 -3F(1-3Z^2)
\end{split}
\]
Collecting terms yields \eqref{right_hand_side_1}. For  \eqref{right_hand_side_2}, we compute
\[
\begin{split}
\tilde \nabla^a (\alpha_H^{(-1)})_a= & \tilde \nabla^a \left [ -F' Z Z_a + \frac{1}{4}\partial_u^2 F(1-Z^2)Z_a \right]\\
=& \frac{1}{4} F'''(1-Z^2)^2 -2 F'' Z(1-Z^2)-F' (1-3Z^2).
\end{split}
\]
\end{proof}
\begin{lemma} Let $G$ be an anti-derivative of $F$ such that $G'=F$ and $G(Z)$ denote the composition of $G$ and $Z(u^a)$, then
\begin{equation} \label{sol_optimal_first_order}
\begin{split}
N = & \frac{1}{2} Z G(Z) \\
(X^0)^{(-1)} =  &\frac{1}{2} G(Z)
\end{split}
\end{equation}
solves the optimal embedding equation  \eqref{optimal_first_order}.
\end{lemma}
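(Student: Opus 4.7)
The proof should be a direct verification that the proposed $N$ and $(X^0)^{(-1)}$ satisfy the two equations in \eqref{optimal_first_order} whose right-hand sides were just computed in \eqref{right_hand_side_1} and \eqref{right_hand_side_2}. The main tool is the fact that $Z=\tilde d_i \tilde X^i$ is a first spherical harmonic on $S^2$, so
\[ \widetilde\Delta Z = -2Z, \qquad |\tilde\nabla Z|^2 = 1-Z^2. \]
Consequently, for any smooth function $f$ of one variable, a one-line chain-rule calculation gives the master identity
\[ \widetilde\Delta f(Z) = (1-Z^2)\,f''(Z) - 2Z\,f'(Z), \]
and therefore $(\widetilde\Delta+2)f(Z) = (1-Z^2)f''(Z) - 2Zf'(Z) + 2f(Z)$. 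Every computation in the proof reduces to applying this identity once or twice, using $G'=F$.

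For the $N$-equation, I would apply the master identity with $f(Z)=ZG(Z)$. Then $f'=G+ZF$ and $f''=2F+ZF'$, and after substituting one finds
\[ (\widetilde\Delta+2)\bigl(ZG(Z)\bigr)=(1-Z^2)(2F+ZF')-2Z(G+ZF)+2ZG. \]
The key simplification is that the $-2ZG$ and $+2ZG$ terms cancel, which is exactly why the particular antiderivative constant does not matter and why the ansatz works at all. Collecting the remaining terms gives $2F(1-2Z^2)+ZF'(1-Z^2)$, and multiplying by $-\tfrac{1}{2}$ reproduces \eqref{right_hand_side_1}.

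For the $(X^0)^{(-1)}$-equation, I would first compute $(\widetilde\Delta+2)G(Z)$ by the master identity: this equals $h(Z) := (1-Z^2)F'(Z)-2ZF(Z)+2G(Z)$. Then I would apply the master identity a second time to $h(Z)$, treating it as a function of $Z$ alone; differentiating once in $Z$ again makes the $2G$ term drop out (since its derivative $2F$ cancels against a $-2F$ coming from $-2ZF$), which is the structural reason the answer depends only on $F'$, $F''$, $F'''$. A short arithmetic collection of the terms then produces $F'''(1-Z^2)^2 - 8ZF''(1-Z^2) - 4F'(1-3Z^2)$, and the factor $\tfrac{1}{2}$ from $(X^0)^{(-1)}=\tfrac{1}{2}G(Z)$ matches \eqref{right_hand_side_2} exactly.

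There is no real obstacle here beyond bookkeeping; the only subtlety worth flagging in the writeup is the cancellation of the $\pm 2ZG$ pair in the $N$ equation and the analogous cancellation in the second application of the master identity for $(X^0)^{(-1)}$. These cancellations explain why the ansatz is determined by the antiderivative $G$ of $F$ rather than by $F$ directly, and why the undetermined additive constant in $G$ is harmless, consistent with the non-uniqueness noted in the remark after Theorem~\ref{mass_limit_general}.
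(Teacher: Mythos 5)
Your proposal is correct and follows essentially the same route as the paper: a direct verification using the axially symmetric form of the Laplacian, $\widetilde\Delta f(Z)=\frac{\partial}{\partial Z}\bigl[(1-Z^2)\frac{\partial f}{\partial Z}\bigr]=(1-Z^2)f''-2Zf'$, which is exactly the identity the paper invokes. The only cosmetic difference is the order of operators in the fourth-order equation (you apply $\widetilde\Delta+2$ to $G(Z)$ first and then $\widetilde\Delta$, while the paper does the reverse), which is immaterial since the operators commute, and your intermediate expressions and final cancellations all check out against \eqref{right_hand_side_1} and \eqref{right_hand_side_2}.
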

\begin{proof}
For axially symmetric functions depending on $Z$, we have 
\[  \widetilde  \Delta = \frac{\partial }{\partial Z} [ (1-Z^2) \frac{\partial }{\partial Z} ]. \]
Hence,
\[
\begin{split}
 -\widetilde  \Delta (Z  G(Z) )= & -\frac{\partial }{\partial Z} \left [ (1-Z^2)Z F+ (1-Z^2) G(Z) \right ] \\
=& -F' Z(1-Z^2) -F(1-3Z^2) - F(1-Z^2) + 2Z  G(Z)
\end{split}
\]
and 
\[
-\frac{1}{2}(\widetilde \Delta +2)  (Z  G(Z)  )=- \frac{1}{2} F'Z(1-Z^2) -F(1-2Z^2).
 \]
 
 Next we compute
 \[
 \begin{split}
  &  \frac{1}{2} \widetilde \Delta  ( \widetilde \Delta+2)(G(Z))\\
=&  \frac{1}{2}   ( \widetilde \Delta+2)  \frac{\partial }{\partial Z} (F(1-Z^2))\\
=& \frac{1}{2}   ( \widetilde \Delta+2) [F'(1-Z^2) -2Z F ]\\
=&  F'(1-Z^2) -2Z F + \frac{1}{2}  \frac{\partial }{\partial Z} [ (1-Z^2) \frac{\partial }{\partial Z} (F'(1-Z^2) -2Z F)]  \\
=& F'(1-Z^2) -2Z F + \frac{1}{2}  \frac{\partial }{\partial Z} [ F''(1-Z^2)^2 - 4F'Z(1-Z^2) - 2F(1-Z^2)] \\
=& \frac{1}{2} F''' (1-Z^2)^2  -4 F''Z(1-Z^2)+ 2F'(3Z^2-1).
\end{split}
 \]
\end{proof}
 \begin{corollary}
The condition \eqref{second_order_cond_1} holds for the family of surfaces $\Sigma_{d}$.
 \end{corollary}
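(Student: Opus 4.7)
The plan is to verify condition \eqref{second_order_cond_1}, namely $\int_{S^2}(h_0^{(-1)} - h^{(-1)}) \, {\rm d}V_{S^2} = 0$, by explicit computation of both quantities and direct evaluation of the resulting integral.

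I would begin by computing $h^{(-1)}$ for $\Sigma_d$ in the Vaidya spacetime. Following the same method used to derive $\alpha_H^{(-1)}$ in the proof of Lemma 3.1, one expands $|H|$ using the metric \eqref{new_metric} together with \eqref{useful_expansion}, \eqref{useful_expansion2}, and the mass aspect expansion \eqref{u_expansion}. This yields $h^{(-1)}$ as an explicit function of $Z$, $F(Z)$, and $F'(Z)$. Next, I would compute $h_0^{(-1)}$, the first-order correction to the mean curvature of the isometric embedding of $\Sigma_d$ into $\mathbb{R}^3$. Since the induced metric is $\tilde\sigma_{ab} + d^{-1} F Z_a Z_b + O(d^{-2})$, I would invoke the linearized isometric embedding formulas from \cite{Chen-Wang-Yau5} to express $h_0^{(-1)}$ as another explicit function of $Z$ and $F$.

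Because $F$ depends only on $Z$, both $h_0^{(-1)}$ and $h^{(-1)}$ are axisymmetric, so the integration over $S^2$ reduces to a one-dimensional integral $2\pi \int_{-1}^{1}(\cdot) \, dZ$. I would evaluate this integral using integration by parts in $Z$; all boundary terms at $Z = \pm 1$ vanish due to the $(1-Z^2)$ factors already visible in the computations of Lemma 3.1. After cancellation, the integrand collapses to zero, establishing the corollary.

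The main technical obstacle is the second step: producing the clean explicit formula for $h_0^{(-1)}$ from the linearized isometric embedding. Once this is in hand, the axisymmetric reduction and the one-dimensional integration by parts are routine, and the cancellation pattern closely mirrors the one that appears when verifying that $N = \tfrac{1}{2}ZG(Z)$ actually solves \eqref{optimal_first_order}.
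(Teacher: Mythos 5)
Your route is viable, but it is not the one the paper takes, and one quirk of the paper matters for the comparison: the one-line proof printed under this corollary (``this follows from the solvability of \eqref{optimal_first_order}'') actually establishes condition \eqref{second_order_cond_2} --- the explicit solution $(X^0)^{(-1)}=\frac{1}{2}G(Z)$ of $\widetilde\Delta(\widetilde\Delta+2)(X^0)^{(-1)}=2\tilde\nabla^a(\alpha_H^{(-1)})_a$ forces the right-hand side to be $L^2$-orthogonal to $\tilde X^i$, since $\widetilde\Delta(\widetilde\Delta+2)$ is self-adjoint and annihilates $\tilde X^i$ --- while the computation that verifies \eqref{second_order_cond_1} appears in Section \ref{induced_data}, attached to the other corollary (the two proofs are evidently interchanged). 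That intended proof never computes $h_0^{(-1)}$ pointwise: it quotes from the proof of Theorem 5.2 in \cite{Chen-Wang-Yau5} the integrated identity $\int_{S^2}(h_0^{(-1)}-h^{(-1)})\,{\rm d}V_{S^2}=-\int_{S^2}(V^{(-1)}+h^{(-1)})\,{\rm d}V_{S^2}$, substitutes $V^{(-1)}=\frac{1}{2}F(1-Z^2)$ from \eqref{lemma 4.1} and $h^{(-1)}=\breve h^{(-1)}-\left[F(1-3Z^2)+F'Z(1-Z^2)\right]=-F(1-2Z^2)-\frac{1}{2}F'Z(1-Z^2)$ from \eqref{lemma 4.2} and \eqref{divergence_leading}, and observes that the integrand is a multiple of $\frac{\partial}{\partial Z}\left[FZ(1-Z^2)\right]$, which integrates to zero by \eqref{vanish_by_part}. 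So where you plan to construct $h_0^{(-1)}$ explicitly, the paper bypasses it entirely with an integral identity; that is exactly what turns your ``main technical obstacle'' into a three-line argument.

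That said, your harder route does close, and with less new work than you fear, because the formula you defer is already in the paper: the right-hand side \eqref{right_hand_side_1} of the second equation in \eqref{optimal_first_order} is precisely the linearized mean curvature $h_0^{(-1)}$ of the isometric embedding of $\tilde\sigma_{ab}+d^{-1}FZ_aZ_b$ into $\R^3$. Concretely, writing $\sigma^{(-1)}_{ab}=2\rho\,\tilde\sigma_{ab}+\tilde\nabla_a j_b+\tilde\nabla_b j_a$ with the axisymmetric ansatz $j_a=j(Z)Z_a$ gives $j=\frac{1}{2}G$ and $\rho=\frac{1}{2}ZG(Z)$ (this is why $N=\frac{1}{2}ZG(Z)$ in \eqref{sol_optimal_first_order}), hence $h_0^{(-1)}=-(\widetilde\Delta+2)\rho=-\frac{1}{2}F'Z(1-Z^2)-F(1-2Z^2)$, which coincides \emph{pointwise} with $h^{(-1)}$ above; adding linearized rigid motions changes nothing since $(\widetilde\Delta+2)\tilde X^i=0$. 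So your anticipated cancellation is even stronger than an integration by parts here. Two cautions on the write-up as it stands: the assertion that ``the integrand collapses to zero'' is the conclusion, not an input --- in general (as in the paper's version via $V^{(-1)}+h^{(-1)}$) one should only expect a total $Z$-derivative killed by \eqref{vanish_by_part} --- and your argument consumes $h^{(-1)}$, which is only computed in Section \ref{induced_data}, so as a proof of the Section \ref{section_optimal} corollary it must either be deferred or import those expansions explicitly. Until $h_0^{(-1)}$ and $h^{(-1)}$ are actually produced, the proposal is a correct plan rather than a proof.
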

 \begin{proof}
 This follows from the solvability of \eqref{optimal_first_order}.
 \end{proof}

Finally, we evaluate the integrals.  
\begin{lemma} \label{isometric_integral}
For the $N$ and $(X^0)^{(-1)}$ solving \eqref{optimal_first_order}, 
\begin{align}
\label{integral_N}\int_{S^2} \left[ \frac{|\tilde \nabla N|^2}{2} - N^2  \right]{\rm d}V_{S^2} = &  \int_{S^2} \frac{1}{8} F^2Z^2(1-Z^2) {\rm d}V_{S^2} \\
\label{integral_X}\int_{S^2} (X^0)^{(-1)}\widetilde  \Delta (\widetilde  \Delta+2) (X^0)^{(-1)} {\rm d}V_{S^2} =& \int_{S^2} \frac{1}{4} (F')^2(1-Z^2)^2 {\rm d}V_{S^2}
\end{align}
\end{lemma}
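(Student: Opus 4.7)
\smallskip

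\noindent\textbf{Proof plan.} Both identities reduce, via integration by parts on $S^2$ and the explicit formulas for $N$ and $(X^0)^{(-1)}$, to one–dimensional integrals in the variable $Z$, which are then evaluated with an integration by parts that exchanges $F'$ (or $F''$) for $F$ (or $F'$) using $G'=F$.

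\smallskip

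\noindent\emph{First integral.} I start from the self-adjointness of $\widetilde\Delta$ to write
\begin{equation*}
\int_{S^2}\!\left[\tfrac12|\tilde\nabla N|^2-N^2\right]dV_{S^2}
= -\tfrac12\int_{S^2}N\,(\widetilde\Delta+2)N\,dV_{S^2}.
\end{equation*}
Since $N=\tfrac12 ZG(Z)$ and the right-hand side of the second equation in \eqref{optimal_first_order} was already computed in \eqref{right_hand_side_1}, this becomes
$\tfrac12\int_{S^2}N\bigl[-\tfrac12 F'Z(1-Z^2)-F(1-2Z^2)\bigr]dV_{S^2}$.
Because everything depends only on $Z$, I reduce the integral to $2\pi\int_{-1}^{1}(\cdots)\,dZ$ using $|\tilde\nabla Z|^2 = 1-Z^2$. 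The $F'\,G$ cross-term is integrated by parts against $Z^2(1-Z^2)$; the boundary term vanishes thanks to the $(1-Z^2)$ factor, and using $G'=F$ a cancellation with the remaining $FGZ(1-2Z^2)$ term leaves precisely $\tfrac18\int F^2 Z^2(1-Z^2)\,dV_{S^2}$.

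\smallskip

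\noindent\emph{Second integral.} Here the cleanest route is to exploit the first equation of \eqref{optimal_first_order}, which says $\widetilde\Delta(\widetilde\Delta+2)(X^0)^{(-1)}=2\tilde\nabla^a(\alpha_H^{(-1)})_a$. Integrating by parts once on $S^2$ gives
\begin{equation*}
\int_{S^2}(X^0)^{(-1)}\,\widetilde\Delta(\widetilde\Delta+2)(X^0)^{(-1)}\,dV_{S^2}
= -2\int_{S^2}\tilde\nabla^a(X^0)^{(-1)}\,(\alpha_H^{(-1)})_a\,dV_{S^2}.
\end{equation*}
Plugging in $(X^0)^{(-1)}=\tfrac12 G(Z)$, the explicit formula for $\alpha_H^{(-1)}$ from \eqref{lemma 3.1}, and $|\tilde\nabla Z|^2=1-Z^2$, the right-hand side reduces to
\begin{equation*}
\int_{S^2}FF'Z(1-Z^2)\,dV_{S^2}-\tfrac14\int_{S^2}FF''(1-Z^2)^2\,dV_{S^2}.
\end{equation*}
Axial symmetry again reduces this to a 1D integral; one integration by parts on the $FF''$ term (boundary terms vanish by the $(1-Z^2)^2$ factor) produces a $-\int(F')^2(1-Z^2)^2\,dZ$ together with a $+4\int FF'Z(1-Z^2)\,dZ$, the latter exactly cancelling the remaining cross-term and leaving $\tfrac14\int(F')^2(1-Z^2)^2\,dV_{S^2}$.

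\smallskip

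\noindent\emph{Expected obstacle.} There is no deep obstacle; the computation is essentially bookkeeping. The main care needed is in the choice of integration-by-parts: one must pair $F'$ (or $F''$) with a factor containing $(1-Z^2)$ so that the boundary contributions at $Z=\pm1$ vanish, and one must use $G'=F$ in the right direction so that the $FG$ and $F'G$ cross-terms cancel against each other rather than proliferate. Organising the computation as above—replacing the biharmonic operator by the source from \eqref{optimal_first_order} before integrating by parts—keeps the bookkeeping minimal.
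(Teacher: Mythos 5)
Your proposal is correct, and I verified both cancellations: in the first integral the $\tfrac14\int FG\,Z(1-2Z^2)$ cross-terms cancel exactly after integrating $F'G\,Z^2(1-Z^2)$ by parts, and in the second the $\int FF'Z(1-Z^2)$ terms cancel after integrating $FF''(1-Z^2)^2$ by parts, leaving precisely the claimed right-hand sides. Your route is, however, genuinely different from the paper's. The paper never invokes the equation \eqref{optimal_first_order} in this proof: it works directly from the explicit solutions \eqref{sol_optimal_first_order}, expands $\tfrac12|\tilde\nabla N|^2-N^2$ pointwise using $|\tilde\nabla f(Z)|^2=(1-Z^2)(f')^2$, and recognizes the residual terms as exact $Z$-derivatives, namely $\tfrac18\partial_Z\bigl[G^2Z(1-Z^2)\bigr]$ for \eqref{integral_N} and, after one integration by parts converting the quartic operator into $[\widetilde\Delta (X^0)^{(-1)}]^2-2|\tilde\nabla (X^0)^{(-1)}|^2$, the term $-\tfrac12\partial_Z\bigl[F^2Z(1-Z^2)\bigr]$ for \eqref{integral_X}; these then vanish under \eqref{vanish_by_part} because of the $(1-Z^2)$ factors. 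You instead treat each integral as the quadratic form pairing the solution against the source of \eqref{optimal_first_order}, reusing \eqref{right_hand_side_1} and the formula for $\alpha_H^{(-1)}$ from \eqref{lemma 3.1}. What your approach buys: the second integral never sees $G$ at all (only $\tilde\nabla (X^0)^{(-1)}=\tfrac12 F\,\tilde\nabla Z$ enters), the cancellations arise mechanically from a single integration by parts rather than from spotting total-derivative combinations by inspection, and, as a bonus, the identity $\int(\tfrac12|\tilde\nabla N|^2-N^2)=-\tfrac12\int N(\widetilde\Delta+2)N$ together with self-adjointness makes manifest that the answer is insensitive to adding kernel elements of $\widetilde\Delta+2$, consistent with the paper's Remark on non-uniqueness. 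What the paper's route buys is self-containedness (no appeal to the earlier source computations) and a second illustration of the total-derivative mechanism \eqref{vanish_by_part} that is reused several times in Section \ref{induced_data}. One small caveat: your first step does require that $N$ actually solves the second equation of \eqref{optimal_first_order}, i.e.\ the content of Lemma \ref{sol_optimal_first_order}, so your proof is not independent of that lemma, whereas the paper's direct expansion only uses the explicit formula for $N$.
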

\begin{proof}
For any axially symmetric function $f(Z)$, we have
\[ |\tilde \nabla f|^2 = (1-Z^2) (f')^2.  \]
We compute 
\[  
\begin{split}
\frac{|\tilde \nabla N|^2}{2} - N^2 
=&   \frac{1}{8} (1-Z^2) ( G(Z) + ZF)^2 -\frac{1}{4} Z^2 (G(Z))^2\\
=&  \frac{1}{8} (1-Z^2) Z^2 F^2  +\frac{1}{8}(1-3Z^2) (G(Z))^2 + \frac{1}{4} (1-Z^2) ZF (G(Z)  )\\
=&   \frac{1}{8} (1-Z^2) Z^2 F^2  + \frac{1}{8} \frac{\partial}{\partial Z} [(G(Z))^2 (1-Z^2) Z].\\
\end{split}
\]
\eqref{integral_N} follows since for any function $f$ of a single variable, we have
\begin{equation} \label{vanish_by_part}\int_{S^2} f'(Z) {\rm d}V_{S^2}  = 2 \pi (f(-1)- f(1)). \end{equation}

On the other hand,
\[
\int_{S^2}  (X^0)^{(-1)}\widetilde  \Delta (\widetilde  \Delta+2) (X^0)^{(-1)}  {\rm d}V_{S^2}  = \int_{S^2}[ \widetilde  \Delta (X^0)^{(-1)}]^2 - 2 |\tilde \nabla (X^0)^{(-1)}|^2  {\rm d}V_{S^2} .
\]
We compute
\[
\begin{split}
  [ \widetilde  \Delta (X^0)^{(-1)}]^2 - 2 |\tilde \nabla (X^0)^{(-1)}|^2 
=&[\frac{1}{2} F'(1-Z^2) -ZF]^2 - \frac{1}{2} (1-Z^2) F^2  \\
= &  \frac{1}{4} (F')^2 (1-Z^2)  - FF'Z(1-Z^2) - \frac{1}{2} (1-3Z^2) F^2\\
= &  \frac{1}{4} (F')^2 (1-Z^2)  - \frac{1}{2} \frac{\partial}{\partial Z} [F^2Z(1-Z^2)].\\
\end{split}
\]
This implies \eqref{integral_X} .
\end{proof}
Combining Theorem \ref{mass_limit_general} and Lemma \ref{isometric_integral}, the  quasi-local mass of the surface $\Sigma_{d}$ is 
\[
\frac{1}{8 \pi d^2 }\int_{S^2} \left [ \frac{1}{8} (1-Z^2) Z^2 F^2 -  \frac{1}{16} (F')^2 (1-Z^2)  - h^{(-2)} - h^{(-1)}V^{(-1)} -V^{(-2)} \right] {\rm d}V_{S^2}+O(d^{-3})
\]
Let $\hat h$ be the mean curvature of $\Sigma_{d}$  in the hypersurface $t=d$. 
\[ |H|^2 = \sqrt{\hat h^2  - \frac{1}{4} (F')^2(1-Z^2)^2} +O(d^{-3}) . \]
Hence the  quasi-local energy of the surface $\Sigma_{d}$ is 
\begin{equation} \label{optimal_energy_final}
\frac{1}{8 \pi d^2 }\int_{S^2} \left [ \frac{1}{8} (1-Z^2) Z^2 F^2 - \hat h^{(-2)} - \hat h^{(-1)}V^{(-1)} -V^{(-2)} \right] {\rm d}V_{S^2} +O(d^{-3})
\end{equation}
where
\[  \hat h = 2 + \frac{ \hat h^{(-1)}}{d}+\frac{ \hat h^{(-2)}}{d^2} +O(d^{-3}).\]
\section{The expansion of mean curvature and area form} \label{induced_data}
In this section, we compute the expansions of the area form and the mean curvature of $\Sigma_{d}$. We start with the area form.
\begin{lemma} 
\begin{equation}\label{lemma 4.1}
\begin{split}
V^{(-1)} =& \frac{F(1-Z^2)}{2}\\
V^{(-2)} = &-\frac{3}{2}  F Z(1-Z^2) + \frac{1}{4} F' (1-Z^2)^2 - \frac{1}{8}
 F^2 (1-Z^2)^2.\end{split}
\end{equation}
\end{lemma}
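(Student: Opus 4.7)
The plan is to obtain $V^{(-1)}$ and $V^{(-2)}$ from \eqref{expansion_area} by expanding the induced metric $\sigma_{ab}$ of $\Sigma_d$ to order $O(d^{-2})$ and then computing $\sqrt{\det\sigma/\det\tilde\sigma}$ to the same order. Throughout I use that, since $u = -Z + O(d^{-1})$ along $\Sigma_d$, the limit relation \eqref{mass_aspect_limit} gives $F(Z) = M(-Z)$, hence $M'(-Z) = -F'(Z)$ by the chain rule (consistent with $\partial_u M = -F'(Z) + O(d^{-1})$ stated after \eqref{mass_aspect_limit}).

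From $r = d + Z + (1-Z^2)/(2d) + O(d^{-2})$ one gets $1/r = 1/d - Z/d^2 + O(d^{-3})$, and Taylor-expanding $M$ about $u = -Z$ using \eqref{u_expansion} yields $M(u) = F + F'(1-Z^2)/(2d) + O(d^{-2})$. Then
\[
\frac{M(u)}{r} = \frac{F}{d} + \frac{1}{d^2}\left[\frac{F'(1-Z^2)}{2} - ZF\right] + O(d^{-3}).
\]
Multiplying by the expansion $(\sum_i \tilde X_a^i \tilde y^i)(\sum_j \tilde X_b^j \tilde y^j) = Z_aZ_b - 2ZZ_aZ_b/d + O(d^{-2})$ from \eqref{useful_expansion} and substituting into \eqref{new_metric} at $s=1$ gives $\sigma^{(-1)}_{ab} = FZ_aZ_b$ (matching Lemma 3.1) and
\[
\sigma^{(-2)}_{ab} = \left[\frac{F'(1-Z^2)}{2} - 3ZF\right] Z_aZ_b.
\]

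With $P^a{}_b = \tilde\sigma^{ac}\sigma^{(-1)}_{cb}$ and $Q^a{}_b = \tilde\sigma^{ac}\sigma^{(-2)}_{cb}$, the two-dimensional identity $\det(I+M) = 1 + \mathrm{tr}\,M + \det M$, together with the fact that $\sigma^{(-1)}_{ab}$ has rank one (so $\det P = 0$), yields $\det\sigma/\det\tilde\sigma = 1 + \mathrm{tr}\,P/d + \mathrm{tr}\,Q/d^2 + O(d^{-3})$. Using $|\tilde\nabla Z|^2 = \tilde\sigma^{ab}Z_aZ_b = 1-Z^2$ one finds $\mathrm{tr}\,P = F(1-Z^2)$ and $\mathrm{tr}\,Q = [F'(1-Z^2)/2 - 3ZF](1-Z^2)$. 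The scalar expansion $\sqrt{1 + x/d + y/d^2} = 1 + x/(2d) + (y/2 - x^2/8)/d^2 + O(d^{-3})$ then produces \eqref{lemma 4.1}.

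The subtle part is the bookkeeping at order $d^{-2}$: the piece $-F^2(1-Z^2)^2/8$ of $V^{(-2)}$ comes from the $-x^2/8$ correction in the square-root expansion rather than from the $\mathrm{tr}\,Q$ term, and the cross-term $-3FZ(1-Z^2)/2$ assembles from the $-ZF$ contribution in $M(u)/r$ together with the factor of two in the product expansion of $(\sum\tilde X_a^i\tilde y^i)^2$. A missing numerical factor in either place is the most likely source of error.
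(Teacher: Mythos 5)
Your proof is correct and takes essentially the same route as the paper's: both exploit the rank-one structure of the $O(d^{-1})$ metric correction (so the determinant contributes nothing at second order), expand $M(u)/r$ and the angular factors via \eqref{useful_expansion} and \eqref{expansion_M_0}, and then expand the square root, with the paper merely packaging your trace computation through the exact identity $\det\sigma/\det\tilde\sigma = 1+\frac{M(u)}{r}\bigl(1-(\sum_i \tilde X^i\tilde y^i)^2\bigr)$ before expanding. Your order-$d^{-2}$ coefficient $\frac{1}{2}F'(1-Z^2)^2-3FZ(1-Z^2)$ is the right one; the paper's intermediate display writes $\frac{1}{2}F'(1-Z^2)$ without the square, an evident typo, since its final $V^{(-2)}$ agrees with yours.
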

\begin{proof}
We compute
\[  \frac{det(\sigma)}{det(\tilde \sigma)} =  1+ \frac{M(u)}{r} \sum_{ij}\tilde \sigma^{ab} \tilde X^i_a \tilde y^i  X^j_b \tilde y^j,\]
since 
\[  \sigma_{ab} = \tilde \sigma_{ab} + \frac{M(u)}{r}\sigma^{ab} \sum_{ij}\tilde X^i_a \tilde y^i  X^j_b \tilde y^j  \]
and 
\[ det (\sigma^{(-1)}) = 0 . \]
It follows that
\[  \frac{det(\sigma)}{det(\tilde \sigma)} =  1+ \frac{M(u)}{r}(1- \sum_{ij}\tilde X^i \tilde y^i  X^j \tilde y^j). \]
Using the expansion of $r$ and $\tilde X^i \tilde y^i $ in \eqref{useful_expansion} and 
\begin{equation} \label{expansion_M_0}M(u) =  F + \frac{1}{2d} F'(1-Z^2) + O(d^{-2}), \end{equation}
we conclude that 
\[  \frac{det(\sigma)}{det(\tilde \sigma)} =  1+ \frac{F(1-Z^2)}{d} + \frac{1}{d^2} \left [  -3FZ(1-Z^2) + \frac{1}{2} F'(1-Z^2) \right] + O(d^{-3}) .\]
The lemma follows from taking the square root of the above equation.
\end{proof}
Next we compute the expansion of the mean curvature $\hat h$. In terms of the induced metric $\bar g$ of the hypersurface $t=d$, 
\[ \hat h =   \frac{\frac{1}{2} \sigma^{ab} \partial_s \bar g_{ab} - \nabla ^a  \bar g_{as} }{\sqrt{ \bar g_{ss} - \sigma^{ab} \bar g_{as} \bar g_{bs}}}. \]
To make use of the divergence structure, we rewrite $\hat h$ as 
\[ \hat h =   \frac{\frac{1}{2} \sigma^{ab} \partial_s  \bar g_{ab}}{\sqrt{g_{ss} - \sigma^{ab} \bar g_{as} \bar g_{bs}}}   - (\nabla ^a  \bar g_{as})  \left (\frac{1}{\sqrt{ \bar g_{ss} - \sigma^{ab} \bar g_{as} \bar g_{bs}}}-1\right )  - \nabla ^a  \bar g_{as}  . \]
We do not need to compute the last term since its integral vanishes.  Let
\[ \frac{\frac{1}{2} \sigma^{ab} \partial_s  \bar g_{ab}}{\sqrt{ \bar g_{ss} - \sigma^{ab} \bar g_{as} \bar g_{bs}}}   - (\nabla ^a  \bar g_{as})  \left (\frac{1}{\sqrt{ \bar g_{ss} - \sigma^{ab} \bar g_{as} \bar g_{bs}}}-1\right) =  2+ \frac{\breve h^{(-1)} }{d} + + \frac{\breve h^{(-2)} }{d^2} +O(d^{-3}). \]
\begin{lemma}
\begin{equation} \label{lemma 4.2}
\begin{split}
\breve h^{(-1)} = &  -FZ^2 + \frac{1}{2}F' Z(1-Z^2) \\
\breve h^{(-2)} = &  \frac{1}{4} F'' Z(1-Z^2)^2 - \frac{F'}{2} (-1 + 6Z^2 - 5Z^4) + \frac{F}{2} (9Z^3 - 7Z)\\
& + \frac{F^2}{4} (6 Z^2 - 7Z^4) -FF'  \Big [ \frac{1}{2} Z(1-Z^2)^2-\frac{1}{4 } Z^3(1-Z^2) \Big ].
\end{split}
\end{equation}
\end{lemma}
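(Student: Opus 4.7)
The plan is to perform a careful Taylor expansion of every factor in
\[
\hat h_{\text{main}} := \frac{\frac12 \sigma^{ab}\partial_s \bar g_{ab}}{\sqrt{\bar g_{ss}-\sigma^{ab}\bar g_{as}\bar g_{bs}}}-(\nabla^a\bar g_{as})\left(\frac{1}{\sqrt{\bar g_{ss}-\sigma^{ab}\bar g_{as}\bar g_{bs}}}-1\right)
\]
up to and including order $d^{-2}$, and then read off the coefficients. The inputs are all in hand: the explicit form of $\bar g$ in \eqref{new_metric}, the expansions \eqref{useful_expansion} and \eqref{useful_expansion2} of the angular quantities $\tilde X^i\tilde y^i$, $\tilde X^i_a \tilde y^i$ and their $s$-derivatives, and the expansion $M(u)=F+\frac{1}{2d}F'(1-Z^2)+O(d^{-2})$ from \eqref{expansion_M_0}. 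The computation is entirely algorithmic; the content of the lemma lies in identifying the correct $d^{-2}$ coefficient.

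First I would expand the three blocks of $\bar g$ to order $d^{-2}$. Writing $\bar g_{ss}=1+\frac{M}{r}(\tilde X^i\tilde y^i)^2$ and similarly for $\bar g_{sa}$ and $\bar g_{ab}$, each factor $M$, $1/r$, $\tilde X^i\tilde y^i$, $\tilde X^i_a\tilde y^i$ has a known two-term expansion, so the products can be collected into the form $\bar g_{\bullet\bullet}=(\text{flat part})+\frac{1}{d}(\cdots)+\frac{1}{d^2}(\cdots)+O(d^{-3})$. In particular the $d^{-1}$ piece of $\bar g_{ab}$ is $F Z_a Z_b$ (as in \eqref{lemma 3.1}), and the $d^{-2}$ pieces involve combinations of $F,F',Z,Z_a,Z_aZ_b$. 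From these I invert the metric angularly, getting $\sigma^{ab}=\tilde\sigma^{ab}-\frac{F Z^a Z^b}{d}+O(d^{-2})$ using $\sigma^{(-1)}_{ab}=F Z_a Z_b$ and standard Neumann-series expansion, with the $d^{-2}$ correction kept explicitly.

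Second I would compute the numerator $\tfrac{1}{2}\sigma^{ab}\partial_s\bar g_{ab}$ by differentiating $\bar g_{ab}$ in $s$, which converts factors of $\tilde X^i\tilde y^i$ and $\tilde X_a^i\tilde y^i$ to the $s$-derivatives given in \eqref{useful_expansion2}, and by differentiating $M(u)$ in $s$ along $\Sigma_d$ using \eqref{u_expansion}. The denominator expands via $\sqrt{1+\epsilon}=1+\tfrac12\epsilon-\tfrac18\epsilon^2+\cdots$ with $\epsilon=(\bar g_{ss}-1)-\sigma^{ab}\bar g_{as}\bar g_{bs}$; since $\bar g_{as}=O(d^{-1})$ the cross term is pushed to order $d^{-2}$ and is easily handled. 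Dividing and subtracting the divergence correction $(\nabla^a\bar g_{as})(1/\sqrt{\cdots}-1)$ (whose leading piece starts at $d^{-2}$ because $\bar g_{as}=O(d^{-1})$) produces $2+\frac{\breve h^{(-1)}}{d}+\frac{\breve h^{(-2)}}{d^2}+O(d^{-3})$, and matching yields the asserted formulas.

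The main obstacle is the bookkeeping at order $d^{-2}$: there are many cross contributions from products of $d^{-1}$ pieces, and one must carry the $d^{-2}$ terms of $M$, of $1/r$, of $\tilde X^i\tilde y^i$ and of the angular metric consistently. Useful simplifying identities I would invoke repeatedly are $\sum_i \tilde X^i \tilde X^i_a = 0$, $\tilde\sigma^{ab}\tilde X^i_a\tilde X^j_b = \delta^{ij}-\tilde X^i\tilde X^j$, $\tilde\nabla^a Z\,Z_a = 1-Z^2$, $\tilde\Delta Z=-2Z$, and the axisymmetric identity $\tilde\Delta f(Z)=\partial_Z[(1-Z^2)\partial_Z f]$. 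With these the $d^{-1}$ result $\breve h^{(-1)}=-F Z^2+\tfrac12 F' Z(1-Z^2)$ falls out immediately; the $d^{-2}$ computation reduces, after grouping, to the six monomials in $F,F',F'', Z$ displayed in \eqref{lemma 4.2}.
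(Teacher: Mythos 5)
Your plan follows the paper's proof essentially verbatim: the same decomposition of $\hat h$ into the main term and the divergence correction, the same order-by-order expansion of $\bar g_{ss}$, $\bar g_{sa}$, $\sigma^{ab}\partial_s\bar g_{ab}$ using \eqref{useful_expansion}, \eqref{useful_expansion2} and \eqref{expansion_M_0}, the same observation that the correction term $-(\nabla^a\bar g_{as})(1/\sqrt{\cdots}-1)$ only enters at order $d^{-2}$, and the same final matching of coefficients. The only (harmless) difference is that you propose to keep the $d^{-2}$ correction to $\sigma^{ab}$ explicitly, whereas the paper avoids ever needing it because $\sigma^{ab}\cdot 2\sigma_{ab}=4$ exactly, so the inverse metric is only required to order $d^{-1}$ against the remaining $O(d^{-1})$ factors.
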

\begin{proof}
Recall that 
\[
\begin{split}
  \bar g_{ss} = &1 + \frac{M(u)}{r} \sum_{i,j}\tilde X^i \tilde y^i \tilde X^j \tilde y^j\\
  \bar g_{as}  =& \frac{F Z Z_a}{d} + O(d^{-2}).
\end{split} 
 \] 
We compute 
\[ 
\begin{split}
   \frac{1}{\sqrt{ \bar g_{ss} - \sigma^{ab}  \bar g_{as} \bar g_{bs}}}
= & \frac{1}{\sqrt{1+ \frac{M(u)}{r}  \sum_{i,j} \tilde X^i \tilde y^i \tilde X^j \tilde y^j  -\frac{F^2 }{d^2} Z^2(1-Z^2)}} +O(d^{-3}) \\
=& \frac{1}{1 + \frac{M(u)}{2r} \sum_{i,j}\tilde X^i \tilde y^i \tilde X^j \tilde y^j - \frac{F^2}{d^2}  [ \frac{1}{2} Z^2(1-Z^2) +\frac{1}{8} Z^4]}+O(d^{-3}) \\
= & 1-  \frac{M(u)}{2r} \sum_{i,j}\tilde X^i \tilde y^i \tilde X^j \tilde y^j  + \frac{F^2}{d^2}  [ \frac{1}{2} Z^2(1-Z^2) +\frac{3}{8} Z^4]+O(d^{-3}) .
\end{split}
 \]
From the expansion of $M(u)$ in \eqref{expansion_M_0} and the expansions of $r$ and $\tilde X^i \tilde y^i$ in \eqref{useful_expansion}, we conclude
 \begin{equation} \label{mean_curvature_term_1}
 \begin{split}
  &\frac{1}{\sqrt{ \bar g_{ss} - \sigma^{ab}  \bar g_{as} \bar g_{bs}}}\\
  = & 1 - \frac{F Z^2}{2d} + \frac{1}{d^2} \left[ F^2 (\frac{Z^2}{2} - \frac{Z^4}{8}) - \frac{1}{4} F' Z^2(1-Z^2) + \frac{1}{2} F (3Z^3-2Z) \right]+ O(d^{-3}).
  \end{split}
 \end{equation}
Next we compute
\begin{equation} \label{divergence_leading} \nabla ^a  \bar g_{as}  = \frac{1}{d}  \left [F (1-3Z^2)  +F' Z(1-Z^2) \right ] + O(d^{-2}).   \end{equation}
As a result,
 \begin{equation} \label{mean_curvature_term_2}
  - (\nabla ^a  \bar g_{as} )\left (\frac{1}{\sqrt{ \bar g_{ss} - \sigma^{ab} \bar g_{as} \bar g_{bs}}}-1 \right)  = \frac{1}{2d^2} \left [ F^2(1-3Z^2) +FF'  Z^3(1-Z^2)   \right] + O(d^{-3}).
 \end{equation}
Finally, we compute $\sigma^{ab} \partial_s \bar g_{ab}$.
\[ \partial_s  \bar g_{ab}  = 2 \sigma_{ab} + \sum_{i,j} \frac{\partial}{\partial s} [ \frac{M(u)}{r} \tilde X^i_a \tilde y^i X^j_b \tilde y^j]  \]
and 
\[
\begin{split}
   \sigma^{ab} \partial_s  \bar  g_{ab} 
=&  4 +  \sum_{i,j} \sigma^{ab} \frac{\partial}{\partial s} [ \frac{M(u)}{r} \tilde X^i_a \tilde y^i X^j_b \tilde y^j]  \\
= & 4 + (\tilde \sigma^{ab} - \frac{F}{d} \tilde \nabla ^a Z \tilde \nabla^b Z )  \sum_{i,j}  \frac{\partial}{\partial s} [ \frac{M(u)}{r} \tilde X^i_a \tilde y^i X^j_b \tilde y^j] + O(d^{-3}) \\
=&  4 + (\tilde \sigma^{ab} - \frac{F}{d} \tilde \nabla ^a Z \tilde \nabla^b Z )    \frac{\partial_s M(u)}{r}   \sum_{i,j} \tilde X^i_a \tilde y^i X^j_b \tilde y^j    +  F \frac{\partial}{\partial s} [ \frac{\tilde \sigma^{ab}}{r}  \sum_{i,j} \tilde X^i_a \tilde y^i X^j_b \tilde y^j] +O(d^{-3})
\\
=& 4 +  \frac{\partial_s M(u)}{r} (1-  \sum_{i,j}\tilde X^i \tilde y^i  \tilde X^j \tilde y^j) -\frac{FF' Z(1-Z^2)^2}{d^2} - \frac{F}{d^2} \frac{\partial r}{\partial s} (1-Z^2)  \\
    & \quad + \frac{2F}{r} \sum_{i,j} \tilde \sigma^{ab} \tilde X^i_a \frac{ \partial \tilde y^i }{\partial s}X^j_b \tilde y^j +O(d^{-3}) \\
= &     4 +  \frac{\partial_s M(u)}{r} (1-  \sum_{i,j}\tilde X^i \tilde y^i  \tilde X^j \tilde y^j)   - \frac{1}{d^2}  \left[   FF'  Z(1-Z^2)^2 + 3 F Z(1-Z^2)  \right] +O(d^{-3})
\end{split}
\]
where \eqref{useful_expansion2} is used in the last equality. Moreover,
\[  
\begin{split}
\partial_s M(u) 
= &-\partial_u M(u) \frac{\partial r}{ \partial s} \\
=&  [F'+ \frac{1}{2d} F'' (1-Z^2)] [Z+ \frac{1}{d} (1-Z^2)] + O(d^{-2}).
\end{split}
\]
Collecting terms, we obtain
\begin{equation} \label{mean_curvature_term_3}
 \begin{split}
  &\frac{1}{2}  \sigma^{ab} \partial_s  \bar  g_{ab} \\
  = & 2 + \frac{1}{2d} F' Z(1-Z^2)\\
   &+ \frac{(1-Z^2)}{d^2} \left[  \frac{1}{4} F'' Z(1-Z^2)-  F'(2 Z^2- \frac{1}{2} )- \frac{3}{2}F Z - \frac{1}{2} FF'  Z (1-Z^2)\right ] +O(d^{-3}).
  \end{split}
 \end{equation}
 The lemma follows from combining \eqref{mean_curvature_term_1}, \eqref{mean_curvature_term_2} and \eqref{mean_curvature_term_3}.
\end{proof}
 \begin{corollary}
The condition \eqref{second_order_cond_2} holds for the family of surfaces $\Sigma_{d}$.
 \end{corollary}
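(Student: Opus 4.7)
The plan is to deduce condition \eqref{second_order_cond_2} directly from the solvability of the first equation of \eqref{optimal_first_order}, in exact analogy with the proof of the preceding corollary (which obtained \eqref{second_order_cond_1} from the solvability of the $N$-equation). The operator $\tilde\Delta(\tilde\Delta+2)$ on $S^2$ is self-adjoint, and its kernel is spanned by the constants together with the three $\ell=1$ spherical harmonics $\tilde X^i$, since $\tilde\Delta \tilde X^i = -2\tilde X^i$. The Fredholm alternative then says that
$$\tilde\Delta(\tilde\Delta+2)(X^0)^{(-1)} = 2\tilde\nabla^a(\alpha_H^{(-1)})_a$$
is solvable if and only if the right-hand side is $L^2$-orthogonal to this kernel.

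Lemma \ref{sol_optimal_first_order} already produces the explicit solution $(X^0)^{(-1)} = \frac{1}{2}G(Z)$, so solvability is in hand. Orthogonality of the right-hand side to the constant $1$ is automatic because $\tilde\nabla^a(\alpha_H^{(-1)})_a$ is a divergence, while orthogonality to each $\tilde X^i$ is precisely \eqref{second_order_cond_2}. Equivalently, using self-adjointness directly and the fact that $\tilde X^i \in \ker(\tilde\Delta+2)$,
$$2\int_{S^2}\tilde X^i\tilde\nabla^a(\alpha_H^{(-1)})_a\,{\rm d}V_{S^2}=\int_{S^2}\tilde X^i\,\tilde\Delta(\tilde\Delta+2)(X^0)^{(-1)}\,{\rm d}V_{S^2}=\int_{S^2}\bigl[\tilde\Delta(\tilde\Delta+2)\tilde X^i\bigr](X^0)^{(-1)}\,{\rm d}V_{S^2}=0.$$

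No genuine obstacle is anticipated; the entire content is packaged in the existence result of Lemma \ref{sol_optimal_first_order}. As a sanity check one could instead substitute the explicit formula \eqref{right_hand_side_2} for $2\tilde\nabla^a(\alpha_H^{(-1)})_a$, use rotational freedom to take $\tilde d=(0,0,1)$ so that $Z=\tilde X^3$ (the $i=1,2$ components vanish by $\phi$-integration), and reduce the remaining $i=3$ integral to $2\pi\int_{-1}^{1}Z\bigl[\tfrac{1}{4}F'''(1-Z^2)^2-2F''Z(1-Z^2)-F'(1-3Z^2)\bigr]\,dZ$, which vanishes after repeated integration by parts using the vanishing of the boundary terms at $Z=\pm 1$. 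This direct computation merely recovers the orthogonality dictated by solvability and is therefore unnecessary.
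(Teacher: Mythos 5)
Your argument is correct: the kernel of $\tilde\Delta(\tilde\Delta+2)$ on $S^2$ is indeed spanned by the constants and the three first eigenfunctions $\tilde X^i$, and once Lemma \ref{sol_optimal_first_order} exhibits the explicit solution $(X^0)^{(-1)}=\frac{1}{2}G(Z)$ of the first equation of \eqref{optimal_first_order}, self-adjointness together with $(\tilde\Delta+2)\tilde X^i=0$ gives $2\int_{S^2}\tilde X^i\tilde\nabla^a(\alpha_H^{(-1)})_a\,{\rm d}V_{S^2}=\int_{S^2}\bigl[\tilde\Delta(\tilde\Delta+2)\tilde X^i\bigr](X^0)^{(-1)}\,{\rm d}V_{S^2}=0$, which is \eqref{second_order_cond_2}; your sanity-check reduction to $2\pi\int_{-1}^{1}Z\bigl[\tfrac{1}{4}F'''(1-Z^2)^2-2F''Z(1-Z^2)-F'(1-3Z^2)\bigr]\,dZ$ also vanishes after two integrations by parts, consistently with \eqref{right_hand_side_2}. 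The instructive point is the comparison with the paper: the proof printed under this corollary is a different computation, namely $\int_{S^2}(h_0^{(-1)}-h^{(-1)})\,{\rm d}V_{S^2}=-\int_{S^2}(V^{(-1)}+h^{(-1)})\,{\rm d}V_{S^2}=-\frac{1}{2}\int_{S^2}\frac{\partial}{\partial Z}[FZ(1-Z^2)]\,{\rm d}V_{S^2}=0$ by \eqref{vanish_by_part} --- but that integral is condition \eqref{second_order_cond_1}, not \eqref{second_order_cond_2}. Symmetrically, the earlier corollary asserting \eqref{second_order_cond_1} carries the one-line proof ``this follows from the solvability of \eqref{optimal_first_order}'', which is precisely your Fredholm argument and in fact establishes \eqref{second_order_cond_2}: the two corollaries' proofs appear to be interchanged in the paper, and your proposal coincides with the paper's intended argument for the statement as written, fleshed out with the correct kernel identification. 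The one inaccuracy in your write-up is the parenthetical claim that the preceding corollary obtained \eqref{second_order_cond_1} from solvability of the $N$-equation: solvability of $-(\tilde\Delta+2)N=\cdots$ only forces orthogonality of \eqref{right_hand_side_1} to the $\ell=1$ modes, whereas \eqref{second_order_cond_1} is a single $\ell=0$ condition that genuinely requires the computed data $V^{(-1)}$ and $h^{(-1)}$ together with the identity $\int_{S^2} h_0^{(-1)}\,{\rm d}V_{S^2}=-\int_{S^2}V^{(-1)}\,{\rm d}V_{S^2}$ quoted from \cite{Chen-Wang-Yau5} --- which is presumably why the paper proves it by the direct computation you see under the present (mislabeled) corollary.
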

 \begin{proof}
 From the proof of Theorem 5.2 in \cite{Chen-Wang-Yau5},
 \[ \int_{S^2} (h_0^{(-1)} -h^{(-1)}) \, {\rm d}V_{S^2}=- \int_{S^2} (V^{(-1)} + h^{(-1)}) \, {\rm d}V_{S^2}.\]
 As a result, we have
  \[ 
  \begin{split}
     \int_{S^2} (h_0^{(-1)} -h^{(-1)}) \, {\rm d}V_{S^2}
  =& -\frac{1}{2} \int_{S^2} F'Z(1-Z^2)+F(1-3Z^2) \, {\rm d}V_{S^2}\\
  =& -\frac{1}{2} \int_{S^2} \frac{\partial}{\partial Z} [FZ(1-Z^2)] \, {\rm d}V_{S^2}
  \end{split}
  \]
  which vanishes due to \eqref{vanish_by_part}.
 \end{proof}
We are now ready to compute $V^{(-2)} + \breve h^{(-2)}+V^{(-1)}  \breve h^{(-1)}$.
\[
\begin{split}
V^{(-2)} + \breve h^{(-2)}+V^{(-1)}  \breve h^{(-1)}
=&  \frac{1}{4} F'' Z(1-Z^2)^2 + \frac{1}{4}F'  (3 -14Z^2 + 11Z^4 )+ F ( 6Z^3 - 5 Z )\\
&+ \frac{1}{8} \Big [ F^2 (-1 + 10 Z^2   -11Z^4) - (F^2)'   (Z- 3Z^3 + 2 Z^5) \Big].\\\end{split}
\]
Moreover, we observe that 
\[  F^2 (-1 + 10 Z^2   -11Z^4) - (F^2)'   (Z- 3Z^3 + 2 Z^5)  =  -\frac{\partial}{\partial Z} [F^2(Z- 3Z^3 + 2 Z^5)] + Z^2 -Z^4\]
and 
\[
\begin{split}
    & \frac{1}{4} F'' Z(1-Z^2)^2 + \frac{1}{4}F'  (3 -14Z^2 + 11Z^4 )+ F ( 6Z^3 - 5 Z )\\
 = & \frac{1}{4}  \frac{\partial}{\partial Z}[F'Z(1-Z^2)^2 +F(2- 8Z^2 +6 Z^4)]   -F Z. \\ 
 \end{split}
\]
As a result, we conclude that 
\begin{equation}  \label{induced_energy_final} \int_{S^2}  \left [ V^{(-2)} + \breve h^{(-2)}+V^{(-1)}  \breve h^{(-1)}  \right ] {\rm d}V_{S^2}= \int_{S^2} \left [  -FZ + \frac{1}{8}F^2Z^2(1-Z^2)\right ] {\rm d}V_{S^2}. \end{equation}
Combining \eqref{optimal_energy_final} and \eqref{induced_energy_final}, we conclude that
\begin{equation}
E(\Sigma_{d},X,T_0) = \frac{1}{8 \pi d^2} \int_{S^2} Z F  {\rm d}V_{S^2} + O(d^{-3}).
\end{equation}
Applying \eqref{vanish_by_part} again, we have
\[  \int_{S^2} Z F  {\rm d}V_{S^2}  = \frac{1}{2}\int_{S^2} F' (1-Z^2){\rm d}V_{S^2}  .\]
We obtain the following theorem:
\begin{theorem}\label{main_theorem}
Let $X$ be an isometric embedding of $\Sigma_{d}$ into $\R^{3,1}$ such that  $X^0=O(d^{-1})$, and the pair $X$  and $T_0=(1,0,0,0)$ solves the leading order
term of the optimal embedding equation. The quasi-local energy is
\begin{equation}
E(\Sigma_{d},X,T_0) = -\frac{1}{16 \pi d^2} \int_{S^2}  \partial_uM (1-Z^2) {\rm d} V_{ S^2} + O(d^{-3}).
\end{equation}
\end{theorem}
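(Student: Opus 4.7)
My strategy is to assemble the result directly from the ingredients already produced in Sections~\ref{section_optimal} and~\ref{induced_data}, using Theorem~\ref{mass_limit_general} as the scaffolding. By Theorem~\ref{mass_limit_general}, the quasi-local mass is a sum over $S^2$ of four blocks: the ``$N$-block'' $\tfrac12|\tilde\nabla N|^2 - N^2$; the ``$(X^0)^{(-1)}$-block'' $-\tfrac14 (X^0)^{(-1)}\widetilde\Delta(\widetilde\Delta+2)(X^0)^{(-1)}$; and the ``induced-data block'' $-V^{(-2)} - h^{(-2)} - h^{(-1)}V^{(-1)}$. For the first two blocks I would substitute the integrals from Lemma~\ref{isometric_integral}; for the third I would use Lemmas~\ref{lemma 4.1} and~\ref{lemma 4.2}, after first replacing $|H|$ by the slice mean curvature $\hat h$ via the identity $|H|^2 = \hat h^2 - \tfrac14 (F')^2(1-Z^2)^2 + O(d^{-3})$ so that the coefficient $-\tfrac{1}{16}(F')^2(1-Z^2)$ from the $(X^0)^{(-1)}$-block is absorbed and the total becomes \eqref{optimal_energy_final}.

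\textbf{Evaluating the induced-data integral.} Plugging Lemmas~\ref{lemma 4.1} and~\ref{lemma 4.2} into $V^{(-2)} + \hat h^{(-2)} + V^{(-1)}\hat h^{(-1)}$ I expect most terms to organize into perfect $Z$-derivatives, so that the identity \eqref{vanish_by_part} kills their $S^2$-integrals. Specifically, the $F''$ and $F'$ contributions should combine into $\tfrac14 \partial_Z[F'Z(1-Z^2)^2 + F(2-8Z^2+6Z^4)]$ modulo a $-FZ$ remainder, and the $F^2$ and $FF'$ contributions into $-\tfrac18 \partial_Z[F^2(Z-3Z^3+2Z^5)]$ modulo a $\tfrac18 F^2 Z^2(1-Z^2)$ remainder. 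The latter remainder cancels precisely against the $\tfrac18 F^2 Z^2(1-Z^2)$ term retained in \eqref{optimal_energy_final}, leaving only $-\tfrac18\pi d^{-2}\int_{S^2} FZ\,dV_{S^2}$... i.e., after the sign conventions, $E(\Sigma_d,X,T_0) = \tfrac{1}{8\pi d^2}\int_{S^2} ZF\,dV_{S^2} + O(d^{-3})$.

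\textbf{Final rewriting.} To match the stated form, I would apply \eqref{vanish_by_part} to $f(Z) = \tfrac12 F(Z)(1-Z^2)$, whose endpoint values vanish; since $f'(Z) = \tfrac12 F'(1-Z^2) - FZ$, this yields $\int_{S^2} ZF\,dV_{S^2} = \tfrac12 \int_{S^2} F'(1-Z^2)\,dV_{S^2}$. Finally, since $u = -Z + O(d^{-1})$ on $\Sigma_d$ by \eqref{u_expansion} and $\partial_u M = -F'(Z) + O(d^{-1})$ in the limit, the equality $F'(Z) = -\partial_u M$ holds to leading order, yielding the $-\tfrac{1}{16\pi d^2}\int_{S^2}\partial_u M(1-Z^2)\,dV_{S^2}$ form of the theorem.

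\textbf{Main obstacle.} No conceptual difficulty remains once Sections~\ref{section_optimal} and~\ref{induced_data} are in hand; the work is algebraic bookkeeping. The one delicate point is verifying the precise cancellation of the $\tfrac18 F^2 Z^2(1-Z^2)$ terms between \eqref{optimal_energy_final} and \eqref{induced_energy_final}: a single wrong coefficient would spoil the structure and prevent the remainder from being a total $Z$-derivative. I would therefore collect the $F^2$, $FF'$, $F''$, $F'$, and $F$ contributions separately, confirm each is either proportional to $\partial_Z(\cdots)$ or contributes to the residual $-FZ$, and only then invoke \eqref{vanish_by_part} to discard the divergence terms.
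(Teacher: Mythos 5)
Your proposal reproduces the paper's own proof essentially step for step: the same scaffolding via Theorem \ref{mass_limit_general} and Lemma \ref{isometric_integral}, the same absorption of the $(F')^2$ term by passing from $|H|$ to the slice mean curvature $\hat h$ to reach \eqref{optimal_energy_final}, the same organization of the induced-data combination into the identical total $Z$-derivatives with remainders $-FZ$ and $\tfrac18 F^2Z^2(1-Z^2)$ as in \eqref{induced_energy_final}, and the same final rewriting via \eqref{vanish_by_part} together with $\partial_u M=-F'(Z)+O(d^{-1})$. The argument is correct and takes essentially the same route as the paper, including correctly pinpointing the delicate cancellation of $\tfrac18 F^2 Z^2(1-Z^2)$ between \eqref{optimal_energy_final} and \eqref{induced_energy_final}.
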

In \cite{Chen-Wang-Yau5}, an invariant is defined on loops near the infinity in a gravitational perturbation of the Schwarzschild spacetime. We briefly recall the result here. See Section 8 of \cite{Chen-Wang-Yau5} for more details. Given a simple close curve $\gamma$ and a normal vector field $L$ along $\gamma$. For any smooth convex surface $\Sigma$ with boundary $\gamma$ that is tangent to $L$, let $h_{ab}$ be its second fundamental form and ${\rm d}V_{\Sigma}$ be its area form with respect to the Schwarzschild spacetime. Suppose the second fundamental form  and the area form admit the following expansions:
\[ 
\begin{split}
 h_{ab}=& h^{(0)}_{ab} + O(d^{-1}) \\
 {\rm d}V_{\Sigma} =& {\rm d}V^{(0)}_{\Sigma}+ O(d^{-1}).
 \end{split}
 \]
 Suppose we have a one-parameter family $g(z)$ of gravitational perturbations of the Schwarzschild spacetime where $g(0)$ is the Schwarzschild spacetime. Let $\sigma(z)$ and $H(z)$ be the induced metric and the mean curvature vector of  $\Sigma$ with respect to the metric $g(z)$, respectively. Consider the variations of the induced metric and the norm of the mean curvature vector
\[
\begin{split}
\delta \sigma = & \frac{{\rm d} \sigma(z)}{{\rm d} z}|_{z=0}\\
\delta H =& \frac{{\rm d}| H(z)|}{{\rm d} z}|_{z=0}
\end{split}
\]
where $\delta \sigma $ and  $\delta H $ admit the following expansions
\[
\begin{split}
\delta \sigma_{ab} = &  \frac{\delta \sigma^{(-1)}_{ab}}{d} + O(d^{-2})\\
\delta H = & \frac{\delta h^{(-1)}}{d} + O(d^{-2}).
\end{split}
\]
The invariant is defined as follows:
\begin{theorem} \label{Theorem 4.5}
Let $\gamma$ and $L$ as above. Given a one-parameter family of gravitational perturbations of the Schwarzschild spacetime satisfying the dominant energy condition, the following quantity
\[ |\frac{1}{8 \pi}\int_{\Sigma}  \left [ \frac{1}{2} (h^{(0)})^{ab} \delta\sigma_{ab}^{(-1)} + \delta h^{(-1)} \right]{\rm d}V^{(0)}_{\Sigma} |\]
is independent of the choice of $\Sigma$. 
\end{theorem}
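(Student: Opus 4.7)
The plan is to establish $\Sigma$-independence by a Stokes-type argument. Given two admissible surfaces $\Sigma_1$ and $\Sigma_2$ with common boundary $\gamma$, each tangent to $L$, form the closed surface $\Sigma_1 \cup (-\Sigma_2)$ bounding a region $\Omega$. It suffices to show that
\[\int_{\Sigma_1 \cup (-\Sigma_2)}\left[\tfrac{1}{2}(h^{(0)})^{ab}\delta\sigma^{(-1)}_{ab}+\delta h^{(-1)}\right]\mathrm{d}V^{(0)}_\Sigma = 0,\]
from which equality of the two invariants follows up to an orientation sign; the absolute value in the statement is what accommodates this ambiguity.

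First I would identify the integrand as the leading $d^{-1}$ coefficient of the variation $\delta(|H|\,\mathrm{d}V_\Sigma)$ at the Schwarzschild background, taken under a spacetime metric variation that fixes the embedding of $\Sigma$. Because $\Sigma$ is tangent to the prescribed normal field $L$, the mean curvature vector's causal direction is fixed at leading order and $\delta |H|$ collapses to $\tfrac{1}{2}h^{ab}\delta\sigma_{ab}+\delta H$, paralleling the way $\breve h$ was singled out in Section \ref{induced_data} as the mean curvature inside the $t=d$ slice. This identification is what gives the combination in the theorem its geometric meaning rather than being an ad hoc integrand.

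Second, the one-parameter family $g(z)$ satisfies the Einstein equations with matter obeying the dominant energy condition, so linearizing at $z=0$ around Schwarzschild produces linearized Hamiltonian and momentum constraints on spatial slices intersecting $\Omega$. Integrating the linearized Hamiltonian constraint over $\Omega$ and applying a Gauss-Codazzi identity rewrites the bulk integral as a boundary integral equal to $\int_{\partial\Omega}\left[\tfrac{1}{2}h^{ab}\delta\sigma_{ab}+\delta H\right]\mathrm{d}V_\Sigma$ up to Schwarzschild background contributions that vanish by the static vacuum equations satisfied by $g(0)$. The tangency of $\Sigma_1,\Sigma_2$ to $L$ along $\gamma$ ensures that the first-order variation of the induced 3-metric on $\Omega$ matches across $\gamma$, so no residual boundary term on $\gamma$ survives.

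The principal obstacle is the order-counting: the integrand is $O(d^{-1})$ while the region $\Omega$ sits near infinity at scale $d$, so one must carefully verify that the $O(d^0)$ Schwarzschild background contributions to $(h^{(0)})^{ab}\delta\sigma^{(-1)}_{ab}$ and $\delta h^{(-1)}$ cancel against corresponding terms from the Gauss-Codazzi rewriting, leaving only the genuine linearized-vacuum piece which then vanishes. A secondary point requiring verification is that $\Sigma_1$ and $\Sigma_2$ can be joined by a one-parameter family of convex surfaces with common boundary $\gamma$ tangent to $L$, so that the divergence argument on $\Omega$ is geometrically valid throughout.
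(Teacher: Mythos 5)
A preliminary remark: this paper does not actually prove Theorem \ref{Theorem 4.5}; it is recalled verbatim from Section 8 of \cite{Chen-Wang-Yau5} (``in preparation''), so your attempt can only be measured against the expected argument. That said, your overall strategy --- cap off $\Sigma_1 \cup (-\Sigma_2)$, convert the closed-surface integral into a bulk integral of the linearized constraint, and attribute the absolute value in the statement to the orientation ambiguity --- is the natural route and almost certainly the intended one. The orientation explanation in particular is correct: for two caps of a closed surface the divergence argument yields equality up to sign, which is exactly what the absolute value absorbs.

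However, there are two genuine gaps. First, the entire content of such a proof is the pointwise identity expressing $\tfrac{1}{2} h^{ab}\delta\sigma_{ab} + \delta H$ as $-\tfrac{1}{2}\bigl(\operatorname{div} \delta g - \mathrm{d}\,\operatorname{tr}\delta g\bigr)(\nu)$ plus a tangential divergence on $\Sigma$ (the identity underlying \cite{Wang-Yau3}), and you never establish it --- indeed you flag the required cancellation of Schwarzschild background terms as an unresolved ``principal obstacle,'' which is to concede the key step. Your proposed shortcut is also wrong: the integrand is \emph{not} the leading coefficient of $\delta(|H|\,\mathrm{d}V_\Sigma)$, since that variation produces $\delta H + \tfrac{H}{2}\sigma^{ab}\delta\sigma_{ab}$ rather than $\delta H + \tfrac{1}{2}h^{ab}\delta\sigma_{ab}$; even on the umbilic limit sphere, where $h^{(0)}_{ab}=\tilde\sigma_{ab}$ and $H=2$, the trace terms differ by a factor of $2$. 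Moreover, the tangential divergence in the correct identity is precisely where tangency to $L$ enters: each of $\Sigma_1$, $\Sigma_2$ separately contributes a line integral over $\gamma$ of a tangential field contracted with the conormal, and tangency to $L$ forces the two conormals to agree so these line integrals cancel in the difference. Your claim that ``the first-order variation of the induced 3-metric on $\Omega$ matches across $\gamma$'' is not the relevant matching condition --- $\delta g$ is fixed ambient data and matches trivially; it is the first-order \emph{surface} data along $\gamma$ that must agree.

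Second, the bulk term does not vanish because the perturbation is linearized-vacuum --- it is not: the theorem assumes only the dominant energy condition, and the intended application is the Vaidya family, whose linearization carries null dust with $\delta T \sim -\partial_u M/(4\pi r^2)$. What actually kills the bulk contribution is order counting: over a unit-size region $\Omega$ at distance $d$ the linearized matter terms are $O(d^{-2})$, while the invariant is the coefficient of $d^{-1}$; the dominant energy condition, within the decay framework of \cite{Chen-Wang-Yau5}, is what guarantees this extra order of decay. In your write-up the DEC hypothesis is never used at all, which is a structural signal that the argument is incomplete as it stands. Finally, your secondary worry about joining $\Sigma_1$ and $\Sigma_2$ through a one-parameter family of convex surfaces is unnecessary: the Stokes argument needs only the (possibly signed) region between the two surfaces, not a homotopy through admissible surfaces.
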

We evaluate this invariant on loops in the Vaidya spacetime with respect to the Schwarzschild spacetime of mass $m>0$. Given the Vaidya spacetime,
\[ 
g(z)=-(1-  \frac{M(u)}{r}) du^2 -2dudr + r^2 (d\theta^2+\sin^2\theta d\phi^2),
\]
 we apply Theorem \ref{Theorem 4.5} to the following one-parameter family of spacetimes
\[ 
g(z)=-(1-  \frac{M(u)z + m(1-z)}{r}) du^2 -2dudr + r^2 (d\theta^2+\sin^2\theta d\phi^2)
\]
joining the Schwarzschild spacetime of mass $m$ and the given Vaidya spacetime. 

The construction of $\Sigma_{d}$ and the function $Z$ in Section 2 is independent of the mass aspect function. We apply the same construction in each $g(z)$. In particular, we have the surface $\Sigma_{d}$ in the Schwarzschild spacetime $g(0)$ and Theorem \ref{Theorem 4.5} is applicable. 

Given $-1 < c < 1$, let 
\[ \Sigma_{d, c} =  \Sigma_{d} \cap \{  Z \ge c \} \]
be the portion of $\Sigma_{d}$ with $Z \ge c$. In particular, $\gamma_c =\Sigma_{d} \cap \{  Z =c \} $ is the boundary of $\Sigma_{d,c}$. It is also easy to compute the normal vector field $L_c$ of $\gamma_c$ in $\Sigma_{d, c}$ with respect to the metric $g(0)$. Finally, let $S^2_c$ be the portion of $S^2$ with $Z \ge c$. $S^2_c$ is precisely the limit of $\Sigma_{d, c}$ as $d$ approaches infinity.
\begin{proposition}
Let $\gamma_c$ and $L_c$ be as above. For any smooth convex surface $\Sigma$ with boundary $\gamma_c$ that is tangent to $L_c$, we have
\[ |\frac{1}{8 \pi}\int_{\Sigma} \left [ \frac{1}{2} (h^{(0)})^{ab} \delta\sigma_{ab}^{(-1)} + \delta h^{(-1)} \right]{\rm d}V^{(0)}_{\Sigma}|   =  \frac{1}{8}  (1-c^2)| c (M(-c) - m)|. \]
\end{proposition}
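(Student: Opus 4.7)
The plan is to exploit the invariance asserted in Theorem~\ref{Theorem 4.5}: since the integral does not depend on the choice of $\Sigma$, we may evaluate it on $\Sigma=\Sigma_{d,c}$ itself and pass to the limit $d\to\infty$, where the surface collapses to $S^2_c\subset S^2$. At $z=0$ the family $g(z)$ is Schwarzschild of mass $m$, whose mass aspect is constant, so the leading-order second fundamental form of the unit sphere $\Sigma_d$ in Schwarzschild reduces to that of a unit Euclidean sphere, $h^{(0)}_{ab}=\tilde\sigma_{ab}$, and ${\rm d}V^{(0)}_\Sigma={\rm d}V_{S^2}$. In particular $(h^{(0)})^{ab}\,\delta\sigma^{(-1)}_{ab}=tr_{S^2}\,\delta\sigma^{(-1)}$.

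Next I would linearize the expansions of Sections~\ref{section_optimal} and~\ref{induced_data} in $z$. For $g(z)$ the mass aspect is $M_z(u)=zM(u)+(1-z)m$, so by \eqref{mass_aspect_limit} the limiting function is $F_z(\zeta)=zM(-\zeta)+(1-z)m$; at $z=0$ one has $F=m$ and $F'=0$, with $\delta F=M(-Z)-m$ and $\delta F'=-M'(-Z)$. Since \eqref{lemma 3.1} and \eqref{lemma 4.2} are linear in $(F,F')$, the variations $\delta\sigma^{(-1)}_{ab}$ and $\delta\breve h^{(-1)}$ are obtained by direct substitution. Moreover, the identity $|H|^2=\hat h^2-\tfrac{1}{4}(F')^2(1-Z^2)^2$ together with $F'|_{z=0}=0$ yields $\delta h^{(-1)}=\delta\hat h^{(-1)}$, while $\hat h^{(-1)}$ and $\breve h^{(-1)}$ differ only by the divergence term \eqref{divergence_leading}, which is itself linear in $(F,F')$.

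Assembling $\tfrac{1}{2}\,tr_{S^2}\,\delta\sigma^{(-1)}+\delta h^{(-1)}$ from these ingredients, I expect the result to collapse, after a short algebraic manipulation, to the total derivative
\[
-\tfrac{1}{2}\,\frac{d}{dZ}\!\left[\bigl(M(-Z)-m\bigr)\,Z\,(1-Z^2)\right],
\]
in the spirit of the divergence structures exploited in Section~\ref{induced_data}. Integrating over $S^2_c$ via the analogue $\int_{S^2_c}f'(Z)\,{\rm d}V_{S^2}=2\pi\bigl(f(1)-f(c)\bigr)$ of \eqref{vanish_by_part} then reduces everything to boundary data: the endpoint $Z=1$ contributes nothing thanks to the factor $(1-Z^2)$, and the endpoint $Z=c$ contributes $\pi\bigl(M(-c)-m\bigr)\,c\,(1-c^2)$. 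Dividing by $8\pi$ and taking the absolute value produces the claimed identity.

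The main obstacle is bookkeeping rather than strategy: one has to track carefully the interplay between $|H|$, $\hat h$, and $\breve h$ under variation so that $\delta h^{(-1)}$ is computed correctly. Once this is done, the vanishing of $F'$ at the Schwarzschild background linearizes the algebra dramatically, and the total-derivative form of the integrand renders the final integration elementary.
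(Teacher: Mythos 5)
Your proposal is correct and follows essentially the same route as the paper: invoke Theorem~\ref{Theorem 4.5} to reduce to $\Sigma_{d,c}$ with limit $S^2_c$, linearize the expansions \eqref{lemma 3.1}, \eqref{lemma 4.2} and the divergence term \eqref{divergence_leading} at the Schwarzschild background (where $F=m$, $F'=0$), and integrate the resulting total derivative $-\tfrac{1}{2}\tfrac{\partial}{\partial Z}\bigl[(F-m)Z(1-Z^2)\bigr]$, picking up only the boundary contribution at $Z=c$. The total-derivative form you anticipated is exactly the one the paper obtains, and your observation that the divergence term cannot be discarded on $S^2_c$ (unlike on the full sphere) is precisely the relevant subtlety.
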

\begin{proof}
By Theorem \ref{Theorem 4.5}, it suffices to evaluate the integral on $\Sigma_{d, c}$. On $\Sigma_{d, c}$, we have $h^{(0)}_{ab}  = \tilde \sigma_{ab}$ and ${\rm d}V^{(0)}_{\Sigma} = {\rm d} V_{ S^2_c} $. Hence, the invariant is
\[  |\frac{1}{8 \pi} \int_{S^2_c} \frac{1}{2} \tilde \sigma^{ab}\delta\sigma_{ab}^{(-1)} + \delta h^{(-1)} {\rm d} V_{ S^2_c}|. \]
From  \eqref{lemma 3.1}, \eqref{lemma 4.2} and \eqref{divergence_leading}, we have
\[  \frac{1}{2} \tilde \sigma^{ab}\delta\sigma_{ab}^{(-1)}= \frac{(F-m)(1-Z^2)}{2} \]
and 
\[  \delta  h^{(-1)} = -(F-m)(1-2Z^2) - \frac{1}{2}F' Z(1-Z^2).   \]
As a result, 
\[
\begin{split}
   \frac{1}{8 \pi} \int_{S^2_c}  \delta V^{(-1)} + \delta h^{(-1)} {\rm d} V_{ S^2_c} 
=&  -\frac{1}{16 \pi} \int_{S^2_c}   (F-m)(1-3Z^2)+ F' Z(1-Z^2)  {\rm d} V_{ S^2_c} \\
= &  -\frac{1}{ 8} \int_{Z=c}^1 \frac{\partial}{\partial Z} [(F-m)Z(1-Z^2) ] dZ \\
= & \frac{1}{8}  c (1-c^2) (M(-c)- m).
\end{split}
 \]
 \end{proof}
In particular, we observe that while the quasi-local mass on $\Sigma_{d}$ is related to the mass loss, the invariant on the loop is related to the mass aspect function.

\end{document}